\documentclass[a4paper, 12pt,oneside,reqno]{amsart}
\usepackage[a4paper]{geometry}
\usepackage{amssymb,amsmath,amsthm}
\usepackage[T2A]{fontenc}
\usepackage{hyperref}

\usepackage{xcolor}

\def\Nov{\mathrm{Nov}}
\def\Com{\mathrm{Com}}
\def\Lie{\mathrm{Lie}}
\def\Res {\mathop {\fam 0 Res}}

\def\Cur {\mathop {\fam 0 Cur}\nolimits}

\def\Cend {\mathop {\fam 0 Cend}\nolimits}
\def\CDer {\mathop {\fam 0 CDer}\nolimits}

\def\oo#1{\mathbin{{}_{(#1)}}}
\def\ooc#1{\mathbin{{\circ}_{#1}}}

\newtheorem{lemma}{Lemma}
\newtheorem{proposition}{Proposition}
\newtheorem{theorem}{Theorem}
\newtheorem{corollary}{Corollary}
\newtheorem{remark}{Remark}

\theoremstyle{definition}
\newtheorem{example}{Example}
\newtheorem{definition}{Definition}

\title{On the structure of finite Novikov conformal algebras}
\author{Pavel Kolesnikov, Jiefeng Liu}
\address{Sobolev Institute of Mathematics, Novosibirsk (Russia)}
\address{Northeast Normal University, Changchun (China)}

\keywords{Novikov algebra, conformal algebra, simple algebra, semisimple algebra, cohomology}
\subjclass[2020]{17B69, 17D25, 16S99}

\thanks{Supported in part by RSF 25-41-00005, NSFC (W2412041, 12371029) and the National Key Research and Development Program of China (2021YFA1002000).}

\begin{document}

\begin{abstract}
We  classify finite simple Novikov conformal algebras over an algebraically closed field of characteristic zero. The classification consists of the current conformal algebra over the base field  and a one-parameter list of Virasoro-like conformal algebras.
A semisimple finite Novikov conformal algebra is proved to be a direct sum of simple ones.
We also describe deformations and central extensions of finite simple Novikov conformal algebras.
\end{abstract}

\maketitle

\section{Introduction}

The classification of finite simple conformal algebras 
is a foundational problem in the structural theory initiated by 
V.~Kac~
\cite{KacVABeginn}. A Lie conformal algebra encodes the singular part of the operator product expansion in two-dimensional conformal field theory; from the algebraic point of view, it may be regarded as an algebra in the pseudo-tensor category of modules over the polynomial ring $H=\Bbbk[\partial]$~\cite{BDK2001}. Following Kac's work, the complete descriptions of simple finite Lie and associative conformal algebras were obtained in~\cite{DK1998}, the Jordan case was settled in~\cite{Zelm2000} and further developed in~\cite{KR-2008}, and the corresponding classifications of Lie conformal superalgebras followed in~\cite{FK-2002,FKR-2004}. In this well-established landscape, the Novikov variety stands out as a natural and physically motivated class that  has remained largely unexplored from the structural point of view.

Novikov algebras first arose in the study of Hamiltonian differential operators~\cite{GelDorfm} and generalized Poisson brackets in hydrodynamic-type equations~\cite{BalNov}; they are deeply intertwined with the algebraic structures underlying two-dimensional conformal field theory~\cite{BDK2001,BK2002}. A Novikov algebra is a linear space $V$ equipped with a bilinear operation $\circ$ satisfying the left-symmetric and right-commutative identities
\[
(a\circ b)\circ c - a\circ (b\circ c) = (b\circ a)\circ c -b\circ (a\circ c), 
\quad 
(a\circ b)\circ c = (a\circ c)\circ b,
\]
for all $a,b,c\in V$. The first structure results for the ordinary (non-conformal) case were obtained in~\cite{Zelmanov87}: a finite-dimensional simple Novikov algebra over a field of characteristic zero is a field, and every semisimple one is a direct sum of simple algebras. Further progress in~\cite{Osborn92,ZP2024,Xu2001} culminated in a complete understanding of finite-dimensional simple Novikov algebras as Gelfand--Dorfman constructions of differentiably simple commutative algebras. 

Despite this accumulated understanding, the structure of finite simple Novikov conformal algebras has remained entirely open, leaving a conspicuous gap in the conformal algebra literature.
The present paper fills this gap. 
Our main result is a complete classification of finite simple and semisimple Novikov conformal algebras over an algebraically closed field~$\Bbbk$ of characteristic zero, together with the description of their deformations and central extensions. We prove that every simple finite Novikov conformal algebra is isomorphic either to the current conformal algebra 
$\Cur\Bbbk$ or to a one-parameter family of Virasoro-like conformal algebras $V_{\alpha}$ 
($\alpha\in\Bbbk$); moreover, every semisimple finite Novikov conformal algebra is a direct sum of such simple ones. This completes the structural picture of finite simple conformal algebras initiated by V.~Kac, adding the Novikov variety as the last natural class to the landscape alongside Lie, associative, and Jordan.

Our approach rests on the structure theory of associative conformal algebras with a finite faithful representation developed in~\cite{Kol2006Adv}. The central new ingredient is the analysis of the right multiplication algebra $\mathcal{R}$ and its interaction with conformal derivations. We show that the semisimplicity problem for a finite Novikov conformal algebra can be reduced to that of an associative conformal algebra with a finite faithful representation, and since the latter is fully understood by~\cite{Kol2006Adv}, this reduction yields a practical criterion for detecting semisimplicity and decomposing arbitrary finite Novikov conformal algebras. 
Beyond the classification itself, these criteria are of independent interest and are potentially applicable to a wider class of non-associative conformal algebras. In this sense, the paper not only settles the last piece of the classification problem described above, but also reinforces the general philosophy that finite simple conformal algebras are governed by the representation theory of associative conformal algebras, thereby offering a unified perspective for future investigations.

The paper is organized as follows. In Section~2 we recall the basic definitions of Novikov conformal algebras and introduce the notation used throughout. Section~3 develops the necessary machinery of conformal endomorphisms and establishes the key structural properties of the right multiplication algebra $\mathcal{R}$. Section~4 contains the classification of simple and semisimple finite Novikov conformal algebras. Section~5 is devoted to cohomological issues.
We propose a general approach to define conformal cohomologies for an arbitrary class of algebras corresponding to a binary quadratic operad based on the Chevalley--Eilenberg cohomology of Lie conformal algebras \cite{BKV1999}. Being applied to the operad of Novikov algebras, this method allows us to describe deformations and central extensions via conformal cohomology.

\section{Novikov conformal algebras}

Given an arbitrary non-associative algebra 
$(A,\cdot )$ over a field $\Bbbk $, 
$\mathrm{char}\,\Bbbk =0$, 
a subspace $C\subseteq A[[z,z^{-1}]]$
of the space of formal distributions 
with coefficients in $A$ is called a 
{\em conformal algebra of formal distributions} \cite{KacForDist}
if the following three conditions hold:
\begin{itemize}
    \item [(C1)]
for every $a(z),b(z)\in C$ there exists 
an integer $N\ge 0$ such that 
\[
a(w)b(z)(w-z)^N = 0
\]
in $A[[w,w^{-1},z,z^{-1}]]$;

\item[(C2)]
if $a(z)\in C$ then $\partial_z a(z)\in C$;

\item[(C3)]
if $a,b\in C $ then the formal distributions---coefficients 
of the polynomial 
\[
(a\oo\lambda b)(z) =\Res\limits_{w=0} a(w)b(z)\exp (\lambda (w-z))
\]
(at $\lambda^n$, $n\ge 0$) 
belong to $C$.
\end{itemize}

Conformal algebras of formal distributions are models of the abstract theory of conformal algebras.
By definition \cite{KacVABeginn}, 
a {\em conformal algebra} $C$
is a linear space equipped with a linear map $\partial$ 
and with a polynomial-valued bilinear operation $(\cdot \oo\lambda \cdot )$ satisfying the {\em sesqui-linearity} condition:
\begin{equation}\label{eq:sesqui-lin}
(\partial a\oo\lambda b) = -\lambda (a\oo \lambda b),
\quad 
(a\oo\lambda \partial b) = (\partial+\lambda) (a\oo \lambda b).
\end{equation}

For every (abstract) conformal algebra $C$ one may construct an ``ordinary''  algebra $A=\mathcal A(C)$ such that $C$
embeds into $A[[z,z^{-1}]]$ as a conformal algebra of formal distributions. 
If $\mathcal A(C) $ is associative (Lie, commutative, etc.) then $C$ is said to be an associative (resp., Lie, commutative, etc.) conformal algebra. In fact, for a variety $\mathfrak V$ of ordinary algebras 
a conformal algebra $C$ is a $\mathfrak V$-conformal algebra if and only if there exists an algebra $A$ from $\mathfrak V$ such that $C$ is isomorphic to a conformal algebra of formal distributions over~$A$.

Every Lie conformal algebra may be presented by formal distributions over an ``ordinary'' Lie algebra. 

\begin{example}\label{exmp:Vir}
The Virasoro conformal algebra $\mathrm{Vir}$
is the free $\Bbbk[\partial ]$-module generated by a single element $v$ such that 
\[
(v\oo\lambda v)(z) = \partial v(z) +2\lambda v(z).
\]
As a conformal algebra of formal distributions, 
$\mathrm{Vir}$
is generated by the single series 
$v(z) = \sum\limits_{n\in \mathbb Z}
t^n\partial_t z^{-n-1}$
over the Witt Lie algebra $\mathrm{Der}\,\Bbbk [t,t^{-1}]$.
\end{example}

\begin{example}\label{exmp:Current}
Given an ``ordinary'' algebra $(P,\cdot)$,
construct the free module 
$C=\Bbbk [\partial ]\otimes P$ and define 
\[
(x\oo\lambda y) = xy,\quad x,y\in P,
\]
then extend to all $C$ by
\eqref{eq:sesqui-lin}. Then $C$ is a conformal algebra called {\em current} conformal algebra over $P$ denoted $\Cur P$. In this case, 
$\mathcal A(C) = P[t,t^{-1}]$, 
so  $\Cur P$ is a $\mathfrak V$-conformal algebra if and only if $P$ belongs to~$\mathfrak V$.
\end{example}

An alternative but equivalent approach to the definition of what is a $\mathfrak V$-conformal algebra was proposed in \cite{BDK2001}. 
A conformal algebra may be considered 
as an algebra in the pseudo-tensor category 
$H$-mod for $H=\Bbbk [\partial ]$. If we denote by $\mathcal O_{\mathfrak V}$ the operad governing the variety $\mathfrak V$ of algebras \cite{GK1994} then the class of $\mathfrak V$-conformal algebras coincides with the class of all morphisms from $\mathcal O_{\mathfrak V}$ to $H$-mod.

A conformal algebra is said to be {\em finite} if it is finitely generated as a module over $H=\Bbbk [\partial ]$.
An ideal $I$ of a conformal algebra $C$ is an $H$-submodule which is closed with respect to $(\cdot\oo\lambda \cdot)$, i.e., 
$(I\oo\lambda C) + (C\oo\lambda I)\subseteq I[\lambda ]$.
A conformal algebra $C$ is said to be {\em simple} if 
$(C\oo\lambda C)\ne 0$ and there are no 
non-zero proper ideals of~$C$.

If $\mathfrak V$ is a 2-variety (i.e., square of an ideal is again an ideal; so are the varieties of Lie, associative, and Novikov algebras) then, 
given an ideal $I$ in a $\mathfrak V$-algebra, 
it is natural to consider the descending series of ideals $I^{(1)}=I$, $I^{(n+1)} = I^{(n)}I^{(n)}$. 
An ideal $I$ is {\em solvable} if $I^{(n)}=0$ for some $n\ge 1$. An algebra without nonzero solvable ideals is called {\em semisimple}. 
For $\mathfrak V$-conformal algebras,  the definition of semisimplicity is completely similar.

A simple finite Lie conformal algebra $V$ is isomorphic either  to 
$\Cur \mathfrak g$ for a simple finite-dimensional Lie algebra $\mathfrak g$ 
or to the Virasoro conformal algebra \cite{DK1998}. In the associative or Jordan cases
(\cite{Zelm2000}, see also \cite{KR-2008})
only current conformal algebras emerge in the classification of simple finite ones. The purpose of this paper is to describe simple and semisimple finite Novikov conformal algebras.

Let $\Nov$ be the class of all Novikov algebras over the field $\Bbbk $. Denote by $\mathcal O_{\Nov}$ 
the corresponding operad. Then a Novikov conformal algebra is a morphism from $\mathcal O_{\Nov }$ to 
the pseudo-tensor category $\Bbbk [\partial ]$-mod. 
Being translated to the language of $\lambda $-products, this definition turns into the following one.  

\begin{definition}[\cite{Hong2015}]
A Novikov conformal algebra is a left (unital) module $V$ over $H=\Bbbk [\partial ]$
equipped with a sesqui-linear operation 
\[
(\cdot \ooc{(\lambda)} \cdot ): V\otimes V \to 
\Bbbk [\partial,\lambda ]\otimes _H V\cong V[\lambda ]
\]
such that the following identities hold for all 
$a,b,c\in V$:
\begin{equation}\label{eq:LSym}
(a\ooc{(\lambda)} b)\ooc{(\lambda +\mu)} c - a\ooc{(\lambda)} (b\ooc {(\mu)} c)
=
(b\ooc{(\mu)} a)\ooc{(\lambda +\mu)} c - b\ooc{(\mu)} (a\ooc{(\lambda)} c),
\end{equation}
\begin{equation}\label{eq:RCom}
(a\ooc{(\lambda)} b)\ooc\mu c = \{(a\ooc{(\lambda)} c)\ooc{(\mu-\lambda )} b\}.
\end{equation}
Hereinafter, we use the notation
\[
\{x\ooc{(\lambda )} y\} = (x\ooc{(-\partial-\lambda)} y).
\]
\end{definition}

\begin{example}\label{exmp:CurrentNov}
If $A$ is a Novikov algebra with a binary product 
$\circ : A\otimes A\to A$ then the current conformal algebra $\Cur A$ (see Example~\ref{exmp:Current}) is a Novikov conformal algebra.
\end{example}

\begin{example}\label{exmp:Valpha_distr}
Let $A=\Bbbk[t,t^{-1}]$ be the (commutative) algebra of Laurent polynomials. Let us fix a scalar $\alpha \in \Bbbk $ and define the new operation $\circ $ on $A$ by the rule 
\[
f(t)\circ g(t) = f(t)g'(t) +\alpha f(t)g(t), 
\]
where $g' = \partial_t g$ is the ordinary derivative. Then $(A,\circ ) $ is a Novikov algebra. Consider the formal distribution 
\[
v(z) = \sum\limits_{n\in \mathbb Z}
 t^nz^{-n-1} \in A[[z,z^{-1}]].
\]
Then 
$v(w)\circ v(z) (w-z)^2 =0$ and 
\[
(v\ooc{(\lambda)} v)(z) = \alpha v(z) -\partial_z v(z) -\lambda v(z),
\]
so the distribution $v(z)$ together with 
all its formal derivatives span a conformal algebra of formal distributions over the Novikov algebra $(A,\circ )$.
\end{example}

\begin{example}\label{exmp:Quadratic}
Let $P$ be a Novikov--Poisson algebra \cite{Xu1993}, 
i.e., 
a linear space with two operations $\circ, *:P\otimes P\to P$
such that $(P,\circ)$ is a Novikov algebra, $(P,*)$ is an associative and commutative algebra, and the following 
identities hold:
\[
(x\circ y)*z = (x*z)\circ y, 
\quad 
x*(y\circ z) - x*(z\circ y)  = y\circ (x*z) - z\circ (x*y).
\]
Then the free $H$-module $H\otimes P$ 
equipped with the $\lambda $-product 
\[
(1\otimes x)\ooc{(\lambda)} (1\otimes y ) = 1\otimes (x\circ y) 
+\lambda (1\otimes (x*y)) + \partial \otimes (x*y), 
\quad x,y\in P,
\]
is a Novikov conformal algebra.
\end{example}

In particular, for a fixed scalar $\alpha \in \Bbbk $,
the 1-dimensional space 
$P=\Bbbk v$ is a Novikov--Poisson algebra relative to the operations 
\[
v\ast v= v,\quad v\circ v=\alpha v.
\]
The Novikov conformal algebra 
constructed according to Example~\ref{exmp:Quadratic} 
is denoted $\mathcal V_\alpha $, 
it is isomorphic to the algebra from Example~\ref{exmp:Valpha_distr}.

\begin{example}\label{exmp:Differential}
Let $C$ be an associative and commutative conformal algebra 
with operations $\partial $ and $(\cdot\oo\lambda \cdot)$. 
Suppose $D:C\to C$ is a derivation of $C$, i.e., 
a $\partial $-invariant linear map 
such that 
$D(a\oo\lambda b ) = D(a)\oo\lambda b + a\oo\lambda D(b)$
for all $a,b\in C$. 
Fix a scalar $\alpha\in \Bbbk $ and define 
 the new $\lambda$-product on 
 the same $H$-module  $C$:
\[
a\ooc{(\lambda)} b = a\oo\lambda D(b) +\alpha (a\oo\lambda b) , \quad a,b\in C,
\]
is a Novikov conformal algebra denoted $C^{(D,\alpha)}$. 
\end{example}

For example, $D=\partial $ is a derivation in the above-mentioned sense, so every commutative conformal algebra $C$ 
is a Novikov conformal algebra relative to the new operation 
\[
a\ooc{(\lambda)} b = (\partial +\lambda +\alpha)(a\oo\lambda b), \quad a,b\in C.
\]

\begin{remark}
The notion of a derivation on a conformal algebra mentioned above should not be confused with {\em conformal derivation}, which will be discussed later.
\end{remark}

In contrast to ordinary Novikov algebras over a field, there exist Novikov conformal algebras that cannot be embedded 
into a commutative conformal algebra with a derivation. 
However, every finitely generated Novikov conformal algebra 
may be presented as a subalgebra of $C^{(D,0)}$ for an appropriate 
commutative conformal algebra $C$ with a derivation~$D$ \cite{KolNest_arxiv}.

\section{Conformal endomorphisms and the conformal Burnside Theorem}

Let $H=\Bbbk[\partial ]$ be the polynomial algebra as above, and let $V$ be an $H$-module.
A {\em conformal endomorphism} $f$  of $V$ \cite{KacVABeginn} is a linear map 
\[
f_\lambda : V\to \Bbbk[\partial,\lambda ]\otimes_H V\cong 
V[\lambda ], 
\quad 
\]
such that 
\[
f_\lambda (\partial v) = (\partial+\lambda) f_\lambda (v), \quad v\in V.
\]
If $V$ is a finitely generated $H$-module then 
the space $\Cend V$ of all conformal endomorphisms of $V$
is an associative conformal algebra relative 
to the operations 
\[
(\partial f)_\lambda (v) = -\lambda f_\lambda (v),
\]
\[
(f\oo\lambda g)_\mu (v) = f_\lambda (g_{\mu-\lambda }(v)), 
\]
for $f,g\in \Cend V$, $v\in V$.

If $V$ is the free $H$-module of rank $n\ge 1$
then $\Cend V$ is often denoted $\Cend_n$. 
The structure of this associative conformal algebra
was systematically studied in \cite{BKL2003}. 
Let us state here an isomorphic presentation 
following \cite{Kol2006Adv}. 

An element of $V$ may be presented by a column 
of polynomials, i.e., $V\cong H\otimes \Bbbk ^n$.
A conformal endomorphism is presented by a matrix 
$a(\partial, x) \in M_n(\Bbbk [\partial , x])$, so that 
\[
a(\partial, x)_\lambda (h(\partial)\otimes u) = 
h(\partial+\lambda )a(-\lambda , \partial) u,
\]
for $h\in H$, $u\in \Bbbk ^n$.
Hence, the associative conformal algebra structure 
on $\Cend_n = M_n(\Bbbk [\partial, x])$
is given by 
\[
a(\partial, x)\oo\lambda b(\partial, x)
 = a(-\lambda , x)b(\partial+\lambda , x+\lambda ).
\]

Suppose $\mathcal S $ is a conformal subalgebra 
of the associative conformal algebra $\Cend_n$, 
and let $U$ be an $H$-submodule 
of $V=H\otimes \Bbbk^n$. 
Then $\mathcal S(U)\subseteq V$ stands for the linear span of all 
coefficients (at $\lambda ^s$, $s\ge 0$)
of all $f_\lambda (u)$, $f\in \mathcal S$, $u\in U$.
Since the base field $\Bbbk $ is infinite, we may replace the collection of coefficients with the collection of values, i.e.,
\begin{equation}\label{eq:Cend(U)_defn}
    \mathcal S(U) = \mathrm{span}_{\Bbbk} \{ f_\alpha(u) \mid f\in \mathcal S, u\in U, \alpha\in \Bbbk \}.
\end{equation}
This is obviously an $H$-submodule of~$V$.

A submodule $U$ of $V$ is said to be 
$\mathcal S$-{\it invariant}
if $\mathcal S(U)\subseteq U$.
If there are no nontrivial $\mathcal S$-invariant submodules in $V$ then the conformal subalgebra
$\mathcal S \subseteq \Cend_n$ is said to be 
{\em irreducible} \cite{BKL2003}.

As an example, consider 
$\Cur_n = M_n(\Bbbk [\partial ])\subset \Cend_n$, 
this is an irreducible conformal subalgebra.
Another series of examples may be constructed as 
follows: choose a matrix $Q(x)\in M_n(\Bbbk [x])$, 
$\det Q\ne 0$, 
and consider 
$\Cend_{n,Q}
= M_n(\Bbbk [\partial, x])Q(x-\partial )$.
This is also an irreducible conformal subalgebra 
of $\Cend_n$.

In \cite{BKL2003} it was conjectured that the images of $\Cur_n$ under automorphisms of $\Cend_n$ and 
conformal subalgebras of the form $\Cend_{n,Q}$,
$\det Q\ne 0$, exhaust all irreducible conformal subalgebras of $\Cend_n$. This conjecture was proved in \cite{Kol2006Adv}, and this result led to the structure theory of associative conformal algebras
with finite faithful representation. 

Namely, we say that an associative conformal algebra
$\mathcal S$
has a finite faithful representation (FFR) if 
$\mathcal S$ is isomorphic to a subalgebra of 
$\Cend V$ for some finitely generated $H$-module~$V$.

\begin{theorem}[\cite{Kol2006Adv}]
\label{thm:FFRClassif}
Let $C$ be an associative conformal algebra
with an FFR. 
\begin{enumerate}
\item If $C$ is simple then $C$ is isomorphic
 either to $\Cur_n$ or to 
 $\Cend_{n,Q}$, for some $n\ge 1$ 
 and $Q\in M_n(\Bbbk[x])$, $\det Q\ne 0$.
 
\item If $C$ is semisimple then $C$ is a finite direct sum of simple ones.

\item If $C$ is not semisimple then there exists a
maximal nilpotent ideal (or nilpotent radical) $N$ of $C$ such that 
$C/N$ is a semisimple conformal algebra with an FFR.
\end{enumerate}
\end{theorem}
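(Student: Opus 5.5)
The plan is to pass from an arbitrary FFR to a chain of irreducible representations, and to reduce all three statements to the classification of irreducible conformal subalgebras of $\Cend_n$ (the conjecture of \cite{BKL2003}).

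\textbf{Step 1: reduce to a free module.} Let $C\subseteq \Cend V$ with $V$ a finitely generated $H$-module, and let $T\subseteq V$ be the torsion part.
\begin{itemize}
\item If $p(\partial)v=0$, then $p(\partial+\lambda)f_\lambda(v)=0$ in $V[\lambda]$. One checks that this forces $f_\lambda(v)$ to lie in $T[\lambda]$.
\item The operator $p(\partial+\lambda)$ is injective on $T[\lambda]$, because its determinant on the finite-dimensional space $T$ is a nonzero polynomial in $\lambda$. Hence $f_\lambda(T)=0$.
\item So $C$ acts on the free module $V/T$. I would show that the kernel of $C\to\Cend(V/T)$ is nilpotent, since such an $f$ maps $V$ into $T$, which $C$ kills.
\end{itemize}
After this we may assume $V=H\otimes\Bbbk^n$.

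\textbf{Step 2: composition series and the radical (statement (3)).}
\begin{itemize}
\item Take a maximal chain of $C$-invariant submodules $0=V_0\subset\dots\subset V_k=V$ whose quotients are torsion-free. Such a chain exists and is finite because ranks strictly increase.
\item Each factor $V_i/V_{i-1}$ is then an irreducible $C$-module, after again discarding torsion as in Step 1.
\item The kernel $N$ of the induced map $C\to\prod_i\Cend(V_i/V_{i-1})$ satisfies $N^{\,k}=0$. Indeed, any $\lambda$-product of $k$ elements of $N$ moves $V_i$ into $V_{i-1}$ at each stage.
\item So $C/N$ is a subdirect product of the images $C_i$, each an irreducible subalgebra of some $\Cend_{n_i}$.
\end{itemize}
Parts (2) and (3) then follow once we know the following. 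First, every irreducible subalgebra is one of $\Cur_n$ (up to an automorphism of $\Cend_n$) or $\Cend_{n,Q}$. Second, all of these are simple, and have no nonzero nilpotent ideals or solvable-type pathologies. Third, a subdirect product of finitely many such simple algebras is a finite direct sum of some of them; this uses the standard argument that a nonzero ideal of the subdirect product meets some factor and projects onto it. Part (1) follows because a simple $C$ has $N=0$ and embeds into a single factor: projection to some $C_i$ is injective by simplicity, and then $C=C_i$ is irreducible.

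\textbf{Step 3: classify irreducible subalgebras $S\subseteq\Cend_n=M_n(\Bbbk[\partial,x])$.} This is the main obstacle and the real content of \cite{Kol2006Adv}. My first attempt would run as follows.
\begin{itemize}
\item Irreducibility means $S(V)=V$ and no proper invariant submodules. Using \eqref{eq:Cend(U)_defn}, evaluating at scalars $\alpha$ shows that the ordinary algebra spanned by the evaluations $a(-\alpha,\beta)$ acts irreducibly on $\Bbbk^n$ for generic $\alpha,\beta$. By Burnside's theorem it is then all of $M_n(\Bbbk)$.
\item Next look at the $H$-module $S\cdot\Bbbk[x]$, i.e. the right $\Bbbk[x]$-structure obtained by shifting $x$. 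Consider the set $L=\{b\in M_n(\Bbbk[x])\}$ of "right multipliers" such that $S$ contains $M_n(\Bbbk[\partial,x])b(x-\partial)$.
\item If $S$ contains elements of positive degree in $x$, a degree-reduction argument should show that $L$ is a nonzero right ideal-like object. It is generated by a single $Q$ with $\det Q\neq0$ since $\Bbbk[x]$ is a PID. This gives $S=\Cend_{n,Q}$.
\item Otherwise all elements of $S$ have bounded $x$-degree. I would then show that $S$ is conjugate, via an automorphism of $\Cend_n$ of the form $a\mapsto A(\partial+x)^{-1}\dots$ with an invertible polynomial matrix, to an algebra with constant-in-$x$ entries. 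Such an algebra contains $\Cur_n$ by the Burnside step, hence equals it.
\end{itemize}
The delicate point is controlling the $x$-degree under $\lambda$-products, which shift $x\mapsto x+\lambda$, so that the dichotomy between bounded and unbounded degree is clean. I expect most of the work to be there.
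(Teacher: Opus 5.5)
The paper gives no argument for Theorem~\ref{thm:FFRClassif}. It is quoted from \cite{Kol2006Adv}, whose real content is the classification of irreducible subalgebras of $\Cend_n$ conjectured in \cite{BKL2003}. Your Steps~1--2 reduce the three claims to exactly that classification, so everything rests on Step~3, and Step~3 is not a proof. Its decisive assertions are only announced (``a degree-reduction argument should show'', ``I would then show that $S$ is conjugate\dots'').

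The set-up of Step~3 is also wrong. The dichotomy ``$S$ has an element of positive $x$-degree $\Rightarrow S=\Cend_{n,Q}$'' fails for $n\ge 2$. For non-constant $U\in\mathrm{GL}_n(\Bbbk[x])$, the automorphism $a(\partial,x)\mapsto U(x)a(\partial,x)U(x-\partial)^{-1}$ of $\Cend_n$ maps $\Cur_n$ onto an irreducible subalgebra containing such elements. For example, $U=I+xE_{12}$ sends $E_{21}$ to $xE_{11}-x(x-\partial)E_{12}+E_{21}-(x-\partial)E_{22}$. The workable split is bounded versus unbounded $x$-degree (finite versus infinite over $H$). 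You would then still have to prove two things:
\begin{enumerate}
\item an infinite irreducible $S$ contains, and hence equals, some $M_n(\Bbbk[\partial,x])Q(x-\partial)$ with $\det Q\ne 0$;
\item a finite irreducible $S$ is conjugate to $\Cur_n$.
\end{enumerate}
This is the hard part of \cite{Kol2006Adv}, and nothing in your sketch delivers it. Your Burnside step does not help either. For a fixed generic point, the values $a(-\alpha,\beta)$, $a\in S$, are not closed under multiplication, because $a\oo\lambda b=a(-\lambda,x)b(\partial+\lambda,x+\lambda)$ couples values at different points. Knowing that all values together generate $M_n(\Bbbk)$ does not put a single element of $\Cur_n$, or of any $M_n(\Bbbk[\partial,x])Q(x-\partial)$, into $S$.

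There is also a hole in Step~2. Maximality of a chain with torsion-free quotients only excludes invariant submodules of intermediate rank. A factor can still be reducible via a proper invariant submodule of full rank, and then there is no torsion to discard. For example, $S=x\Bbbk[\partial,x]\subset\Cend_1$ is a subalgebra with $S(H)\subseteq\partial H$. So $H$ is not an irreducible $S$-module, even though $0\subset H$ is already maximal; $S$ acts irreducibly only on $\partial H$, where it becomes $\Cend_{1,Q}$ with $Q=x$. Before invoking any classification of irreducible subalgebras, you must therefore pass from each factor $M$ to a suitable full-rank invariant submodule. One option is the stable term of $M\supseteq S(M)\supseteq S(S(M))\supseteq\cdots$, but you must prove that this chain stabilizes. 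Restriction to a full-rank invariant submodule is faithful, and a term with $S(M_\infty)=M_\infty$ is irreducible, because conformal endomorphisms kill torsion quotients. Step~1, the nilpotency of $N$, and the subdirect-product argument are fine once every image $C_i$ is known to be one of the simple algebras $\Cur_n$ or $\Cend_{n,Q}$.
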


Suppose $V$ is a finite nonassociative conformal algebra with a $\lambda $-product 
$(\cdot \ooc{(\lambda)} \cdot ): 
V\otimes V\to V[\lambda ]$.
Consider the following conformal endomorphisms 
of $V$ as of an $H$-module:
\[
L^a_\lambda : v\mapsto a\ooc{(\lambda )} v,
\quad 
R^a_\lambda : v\mapsto \{v\ooc{(\lambda)} a\},
\]
for all $a,v\in V$. 
Denote by $\mathcal L$ the conformal subalgebra 
of $\Cend V$ generated by all $L^a$, 
and let $\mathcal R$ stand for the subalgebra 
of $\Cend V$
generated by all $R^a$, $a\in V$. 
Similarly, denote by $\mathcal M$ 
the subalgebra generated jointly by $\mathcal L$
and $\mathcal R$ in $\Cend V$.

Note that a conformal $\mathcal M$-submodule of $V$
is exactly an ideal of $V$. Hence, if $V$ is simple
then $\mathcal M$ is an irreducible conformal subalgebra of $\Cend V$, hence, $\mathcal M$ 
is a simple associative conformal algebra 
with an FFR.

A conformal endomorphism $d\in \Cend V$ is called a 
{\em conformal derivation} 
of the conformal algebra $V$
if 
\begin{equation}\label{eq:CDer-definition}
    d_\lambda (a\ooc {(\mu)} b) 
    = (d_\lambda (a) \ooc{(\lambda +\mu )} b)
    + (a\ooc{(\mu )} d_\lambda (b)),
\end{equation}
for all $a,b\in V$.

For example, if $V$
is an associative conformal algebra then 
for every $a\in V$ the 
operation of commutator 
$d_\lambda(x)  = [a\ooc{(\lambda)} x]
= (a\ooc{(\lambda )}x) - \{x\ooc {(\lambda)} a\}$, 
$x\in V$,
is a conformal derivation.

In general, the space $\CDer V$ of all conformal derivations 
of a (not necessarily associative) conformal algebra 
$V$ is a Lie conformal subalgebra of $\Cend V$.

\begin{example}\label{exmp:CDerCur}
The space  $\CDer\Cur_1$
is an $H$-module of rank one.
\end{example}

Indeed, suppose $d\in \CDer\Cur_1$, $\Cur_1=He$, 
$e\oo\lambda e =e$, 
$d_\lambda (e) = f(\partial, \lambda )e$.
Then 
$f(\partial,\lambda )e
= d_\lambda (e\oo\mu e)
= (f(\partial,\lambda)e\oo{\mu+\lambda } e) + 
(e\oo\mu f(\partial,\lambda )e)
= f(-\mu-\lambda, \lambda )e
+ f(\partial+\mu, \lambda )e.
$
The equation 
\[
f(\partial,\lambda ) - f(\partial+\mu, \lambda ) = 
f(-\mu-\lambda, \lambda)
\]
holds only for 
$f(\partial,\lambda ) =
(\partial+\lambda )h(\lambda )$, 
so it depends only on one 
polynomial $h\in H$. 
Therefore,  the polynomial presentation of $d\in \Cend_1$ is
\[
d = h(-\partial )(x-\partial), \quad h\in H,
\]
so the single generator of $\CDer\Cur_1$ over $H$ 
is $v = x-\partial $.
As a Lie conformal algebra, 
$\CDer\Cur_1$ is isomorphic to the Virasoro 
conformal algebra:
\[
[v\oo\lambda v] = (v\oo\lambda v) - \{v\oo\lambda v\} = (2\lambda +\partial )v.
\]

Recall that an algebra (or a conformal algebra) is called weakly Noetherian if it satisfies the ascending chain condition for two-sided ideals.

\begin{lemma}\label{lem:DerivNilp}
Let $C$ be a weakly Noetherian associative conformal algebra 
and let $I$ be a nilpotent ideal of $C$. 
Then for every $\varphi \in \CDer(C)$ the $H$-submodule 
$I+\varphi(I)$ is a nilpotent ideal of $C$.
\end{lemma}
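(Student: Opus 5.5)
The plan has two parts: first show that $J=I+\varphi(I)$ is an ideal, then show that it is nilpotent. Here $\varphi(I)$ is the $H$-span of all coefficients of $\varphi_\lambda(a)$, $a\in I$; as in \eqref{eq:Cend(U)_defn}, we may equally use the values $\varphi_\alpha(a)$, $\alpha\in\Bbbk$.

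\emph{$J$ is an ideal.} It is an $H$-submodule because $\partial\varphi_\lambda(a)=\varphi_\lambda(\partial a)-\lambda\varphi_\lambda(a)$, which follows from sesqui-linearity of $\varphi$. For $c\in C$ and $a\in I$, the derivation rule \eqref{eq:CDer-definition} gives
\[
(c\oo\mu \varphi_\lambda(a)) = \varphi_\lambda(c\oo{\mu} a) - (\varphi_\lambda(c)\oo{\lambda+\mu} a).
\]
The first term has coefficients in $\varphi(I)$, since $c\oo\mu a\in I[\mu]$. The second has coefficients in $I$, since $I$ is an ideal. Symmetrically,
\[
(\varphi_\lambda(a)\oo{\lambda+\mu} c) = \varphi_\lambda(a\oo\mu c) - (a\oo\mu\varphi_\lambda(c)) \in J[\lambda,\mu].
\]
Because $\lambda$ and $\mu$ are independent, the substitution $\mu\mapsto\mu-\lambda$ gives $J\oo\mu C\subseteq J[\mu]$. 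Together with the previous identity, and with $I$ an ideal, this shows $J$ is an ideal.

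\emph{Nilpotency.} I would use the weak Noetherian hypothesis to bring in the nilpotent radical. By ACC there is a maximal nilpotent ideal $N$. The sum of two nilpotent ideals of an associative conformal algebra is nilpotent, because $(A+B)^{p+q}\subseteq A^p+B^q$ once the products are expanded. Hence $N$ contains every nilpotent ideal, in particular $I$. It then suffices to show $\varphi(N)\subseteq N$, since then $J\subseteq N$ is nilpotent.

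To prove $\varphi(N)\subseteq N$, I would induct on the nilpotency index of an ideal, with $I^2$ of smaller index playing the role of the inductive step. For a nilpotent ideal $I$ with $I^n=0$, take elements $a_1,\dots,a_m\in I$ with $m\ge n$ and independent parameters $\lambda_1,\dots,\lambda_m$. Apply $\varphi_{\lambda_1}\cdots\varphi_{\lambda_m}$ to the zero product $a_1\oo{\mu_1}\cdots\oo{\mu_{m-1}} a_m$ and expand by \eqref{eq:CDer-definition}. The result is a sum over maps $\sigma\colon\{1,\dots,m\}\to\{1,\dots,m\}$, where the $j$-th factor receives the derivations indexed by $\sigma^{-1}(j)$. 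The terms with $\sigma$ bijective are exactly the products $\varphi_{\lambda_{\sigma^{-1}(1)}}(a_1)\cdots\varphi_{\lambda_{\sigma^{-1}(m)}}(a_m)$. Every other term has some factor lying in $I$ itself.

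I would then argue modulo the ideal generated by products containing a factor from $I$. This ideal is contained in $I$, and inductively it is controlled by $I^2+\varphi(I^2)$. The aim is to conclude that long products of elements of $\varphi(I)$ land in $I+\varphi(I^2)$, so that a sufficiently high power of $J$ lies in $I^2+\varphi(I^2)$. That ideal is nilpotent by the induction hypothesis, hence $J$ is nilpotent.

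The main obstacle is this last step: separating the \enquote{bijective} terms of the multinomial expansion from the remaining ones, in the conformal setting with its shifted parameters. Terms in which several derivations hit the same factor involve iterates $\varphi^2(I),\varphi^3(I),\dots$, which are not a priori in $J$. To handle them I would first try symmetrizing over the parameters $\lambda_i$ and extracting the multilinear coefficient in $\lambda_1,\dots,\lambda_m$. This is where characteristic zero enters, since one must divide by $m!$. If that fails, I would enlarge $J$ to $I+\varphi(I)+\dots+\varphi^{r}(I)$ and run the same induction on $r$ alongside the nilpotency index.
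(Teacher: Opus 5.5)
There is a genuine gap in the nilpotency part. Your first step, that $J=I+\varphi(I)$ is an ideal, is correct and is the paper's computation. Your reduction to the maximal nilpotent ideal $N$ is also a legitimate use of the ACC; the paper instead builds a strictly ascending chain $I\subsetneq I+\varphi_{\alpha_1}(I)\subsetneq\cdots$ of nilpotent ideals and lets it stabilise, and the two devices are interchangeable. But you never prove $\varphi(N)\subseteq N$. The induction on the nilpotency index, the claim that long products from $\varphi(I)$ land in $I+\varphi(I^2)$, and both fallback strategies are left unexecuted, and you yourself mark this step as the main obstacle. Moreover, the obstacle you name is not real. In the Leibniz expansion, a term in which some factor receives two or more derivations has, by pigeonhole, another factor receiving none. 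That factor is an element of $I$. The other factors, including the iterates $\varphi^k(a_j)$, only need coefficients in $C$. Since $I$ is a two-sided ideal of $C$, the whole term has coefficients in $I$; nothing has to lie in $J$.

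The missing idea is to run the expansion with one fixed value $\alpha\in\Bbbk$ rather than independent parameters $\lambda_1,\dots,\lambda_m$. With independent parameters, the bijective terms only give a sum symmetrised over $\sigma$, which does not put any individual product into $I$. For fixed $\alpha$, the map $\varphi_\alpha\colon C\to C$ satisfies $\varphi_\alpha(x\oo\mu y)=(\varphi_\alpha(x)\oo{\mu+\alpha}y)+(x\oo\mu\varphi_\alpha(y))$. Let $I^n=0$ and $a_j\in I$, and apply $\varphi_\alpha^n$ to $a_1\oo{\mu_1}(a_2\oo{\mu_2}\cdots a_n)=0$. This yields
\[
n!\,\bigl(\varphi_\alpha(a_1)\oo{\mu_1+\alpha}(\varphi_\alpha(a_2)\oo{\mu_2+\alpha}\cdots\varphi_\alpha(a_n))\bigr)\in I[\mu_1,\dots,\mu_{n-1}].
\]
Dividing by $n!$ (characteristic zero) and shifting each $\mu_j\mapsto\mu_j-\alpha$ gives $(\varphi_\alpha(I))^n\subseteq I$. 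Hence $I+\varphi_\alpha(I)$, which is an ideal by your first step with $\lambda$ specialised to $\alpha$, is nilpotent of index at most $n^2$. Taking $I=N$ gives $\varphi_\alpha(N)\subseteq N$ for every $\alpha$. Therefore $\varphi(N)\subseteq N$, because $\varphi(N)$ is spanned by the values $\varphi_\alpha(a)$ with $a\in N$, $\alpha\in\Bbbk$. No induction on the index and no enlargement by $\varphi^r(I)$ is needed. Note that the passage from the individual maps $\varphi_\alpha$ to all of $\varphi(I)$ is exactly where the ACC enters, via your $N$ or the paper's chain.
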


\begin{proof}
Let $a\in I$, $\alpha \in \Bbbk $, then 
$\varphi_\alpha (a) = \varphi_\lambda (a)|_{\lambda=\alpha }\in C$.
The $H$-submodule $\varphi(I)$ is the linear span of all 
such elements.
For every $x\in C$ we have 
\[
x\oo\lambda \varphi_\alpha (a)
=\varphi_\alpha (x\oo\lambda a) - \varphi_\alpha(x)\oo{\lambda +\alpha } a \in (\varphi_\alpha(I)+ I)[\lambda ].
\]
In a similar way, $\varphi_\alpha(a)\oo\lambda x \in (I+\varphi_\alpha (I))[\lambda ]$, so $I+\varphi_\alpha (I)$ is an ideal of $C$.

Now suppose $I$ is nilpotent, i.e., there 
exists $n\ge 1$ such that 
\[
I^n_{\lambda_1,\dots, \lambda_{n-1}}
= 
I\oo{\lambda_1}(I\oo{\lambda_2} ( \dots (I\oo{\lambda_{n-1}} I)\dots )) = 0
\]
in $C[\lambda_1,\dots , \lambda _{n-1}]$.
Then for every fixed $\alpha \in \Bbbk $
the expression 
for $\varphi_\alpha ^n(I^n_{\lambda_1,\dots, \lambda_{n-1}}) = 0$ leads us to 
\[
(\varphi_\alpha(I) )^n_{\lambda_1+\alpha,\dots ,\lambda_2+\alpha } \subseteq I.
\]
Hence, 
$I+\varphi_\alpha (I)$ is a nilpotent ideal 
of index $\le n^2$.

If $I_0=I$ is not $\varphi$-invariant then 
there exists $\alpha _1\in \Bbbk $
such that $I\subsetneq I_1 = I+\varphi_{\alpha_1}(I)$. 
If the nilpotent ideal $I_1$ is not $\varphi$-invariant then there exists 
$\alpha _2\in \Bbbk $ such that 
$I_1\subsetneq I_2=I_1+\varphi_{\alpha_2}$, 
and so on. 
We obtain an ascending chain of nilpotent ideals $I_k$, 
$k\ge 0$.
Since $C$ is Noetherian, 
there exists a number $N\ge 0$ such that 
$I_N$ is $\varphi$-invariant. 
In particular, $I+\varphi(I)\subseteq I_N$,
thus $I + \varphi(I)$ is nilpotent.
\end{proof}

\begin{proposition}\label{prop:Conf_Comm_Novikov}
For a finite Novikov conformal algebra $V$, 
the generators $L^a$, $R^a $
($a,b\in V$)
of the corresponding 
conformal subalgebra $\mathcal M\subseteq \Cend V$
satisfy the following relations:
\begin{gather}
(R^a\oo\lambda R^b) = \{R^b\oo\lambda R^a\}, 
                             \label{eq:R-R_commute}\\
(R^a\oo\lambda L^b) =  L^{\{b\ooc\lambda a\}},
                       \label{eq:R-L_product} \\
[L^a\oo\lambda R^b] = R^{(a\ooc\lambda b )} 
  - \{R^b\oo\lambda R^a\}. 
                                       \label{eq:L-R_commute}
\end{gather}
\end{proposition}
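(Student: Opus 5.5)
The plan is to apply both sides of each relation to an arbitrary $c\in V$ and compare, using the composition rule $(f\oo\lambda g)_\mu(c)=f_\lambda(g_{\mu-\lambda}(c))$ in $\Cend V$. We also need the behaviour of the bracket $\{\cdot\ooc{(\lambda)}\cdot\}$ on $\Cend V$: for $f,g\in\Cend V$, $\{f\oo\lambda g\}=(f\oo{-\partial-\lambda} g)$, and $\partial$ acts on $\Cend V$ by $(\partial f)_\mu=-\mu f_\mu$. Hence
\[
\{f\oo\lambda g\}_\mu(c)=f_{\mu-\lambda}(g_{\lambda}(c)).
\]
We verify this first, since every right-hand side of the form $\{\cdot\oo\lambda\cdot\}$ reduces to it. The remaining bookkeeping uses $R^a_\lambda(v)=\{v\ooc{(\lambda)}a\}=(v\ooc{(-\partial-\lambda)}a)$, together with sesqui-linearity. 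The latter lets us move $\partial$ through $\lambda$-products, so that substituting $-\partial-\lambda$ into a polynomial in $\lambda$ is legitimate.

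For \eqref{eq:R-R_commute}, the left side evaluated at $c$ is $R^a_\lambda(R^b_{\mu-\lambda}(c))=\{\{c\ooc{(\mu-\lambda)}b\}\ooc{(\lambda)}a\}$, and the right side is $R^b_{\mu-\lambda}(R^a_\lambda(c))$. Rewriting via \eqref{eq:RCom}, this should become $(c\ooc{(\nu)}b)\ooc{(\nu')}a=(c\ooc{(\nu)}a)\ooc{(\nu'-\nu)}b$ with suitable shifts. The shifts are obtained by writing $\{x\ooc{(\lambda)}y\}$ as a full $\lambda$-product with argument $-\partial-\lambda$ and tracking where $\partial$ lands after the outer product; the outer $\partial$ becomes the total $\partial$.

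For \eqref{eq:R-L_product}, we compute $R^a_\lambda(L^b_{\mu-\lambda}(c))=\{(b\ooc{(\mu-\lambda)}c)\ooc{(\lambda)}a\}$. By right commutativity \eqref{eq:RCom}, this should equal $\{b\ooc{(\lambda)}a\}\ooc{(\mu)}c$ after the substitutions, which is $L^{\{b\ooc\lambda a\}}_\mu(c)$. For \eqref{eq:L-R_commute}, the commutator $[L^a\oo\lambda R^b]_\mu(c)$ equals $L^a_\lambda R^b_{\mu-\lambda}(c)-R^b_{\mu-\lambda}L^a_\lambda(c)$, that is,
\[
a\ooc{(\lambda)}\{c\ooc{(\mu-\lambda)}b\}-\{(a\ooc{(\lambda)}c)\ooc{(\mu-\lambda)}b\}.
\]
Left symmetry \eqref{eq:LSym}, with $c$ in the middle slot, turns this associator-type expression into $(a\ooc{(\lambda)} b)$ acting on $c$ from the right minus a term $(c\cdot a)\cdot b$. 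Applying \eqref{eq:RCom} to that last term gives $(c\cdot b)\cdot a$, which is $\{R^b\oo\lambda R^a\}_\mu(c)=R^b_{\mu-\lambda}(R^a_\lambda(c))$ by the formula above.

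The main obstacle is purely the spectral parameter bookkeeping: converting $\{\cdot\}$-brackets into genuine $\lambda$-products with arguments involving $-\partial$, and making sure that $\partial$ inside nested products correctly becomes $\partial+\lambda$ shifts. To handle this, I would use the formal-distribution picture ($V$ embedded in $A[[z,z^{-1}]]$, so $A$ Novikov) or note that the identities are polynomial in the parameters. One can then check each identity by treating $\partial$ as a formal variable, applying sesqui-linearity at each step, and confirming that the parameters match.
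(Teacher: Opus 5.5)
Your proposal is essentially the paper's argument: evaluate both sides on $c\in V$ using $\{f\oo\lambda g\}_\mu(c)=f_{\mu-\lambda}(g_\lambda(c))$, reduce \eqref{eq:R-R_commute} and \eqref{eq:R-L_product} to \eqref{eq:RCom} and \eqref{eq:L-R_commute} to \eqref{eq:LSym} by sesqui-linear bookkeeping, and cross-check in a concrete model (your formal distributions, the paper's embedding into $C^{(D,0)}$). One slip to fix: in the last step the term that \eqref{eq:LSym} produces, $(c\cdot a)\cdot b=\{\{c\ooc{(\lambda)}a\}\ooc{(\mu-\lambda)}b\}$, is already $R^b_{\mu-\lambda}(R^a_\lambda(c))=\{R^b\oo\lambda R^a\}_\mu(c)$, so no appeal to \eqref{eq:RCom} is needed there (your $(c\cdot b)\cdot a$ is $(R^a\oo\lambda R^b)_\mu(c)$, which equals it only via \eqref{eq:R-R_commute}); likewise, in the \eqref{eq:R-R_commute} step the right-hand side of \eqref{eq:RCom} should be written with the braces, as $\{(c\ooc{(\nu)}a)\ooc{(\nu'-\nu)}b\}$.
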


\begin{proof}
On the one hand, the identities \eqref{eq:LSym} and \eqref{eq:RCom} imply, respectively, 
\begin{gather}
  a\ooc{(\lambda)} \{ c\ooc{(\mu)} b\} - \{(a\ooc{(\lambda)} c)\ooc{(\mu )}b\}
   = \{c\ooc{(\lambda +\mu)} \{ a\ooc{(\mu)} b \}\}
   - \{ \{ c\ooc{(\lambda)} a \} \ooc{(\mu)} b \},    \label{eq:LSym-R}\\
 \{\{ c\ooc{(\mu)} b\}\ooc{(\lambda)} a \} =   
  \{\{ c\ooc{(\lambda)} a\}\ooc{(\mu)} b \}        \label{eq:RCom-R}
\end{gather}
in the same way as in \cite{KacForDist}. 
These identities are equivalent to \eqref{eq:L-R_commute}
and \eqref{eq:R-R_commute}. The identity \eqref{eq:R-L_product}
is a different form of \eqref{eq:RCom}.

On the other hand, a finite Novikov algebra can always be embedded into an associative and commutative conformal algebra 
with an ordinary derivation $D$ in such a way that 
$a\ooc{(\lambda)} b = a\oo\lambda Db$. In this way, for example, 
the equation \eqref{eq:LSym-R} is easy to prove:
\begin{multline*}
a\ooc{(\lambda )}\{ c\ooc{(\mu)} b\} - \{(a\ooc{(\lambda)} c)\ooc{(\mu)} b\}
=
a\oo{\lambda} D\{c\oo\mu Db \} - \{(a\oo\lambda Dc)\oo\mu Db\} \\
=
a\oo\lambda \{ c \oo\mu D^2 b\} 
= 
a\oo\lambda (D^2b \oo\mu c),
\end{multline*}
\begin{multline*}
\{c\ooc{(\lambda +\mu)} \{ a\ooc{(\mu )}b \}\}
   - \{ \{ c\ooc{(\lambda)} a \} \ooc{(\mu)} b \}
=
\{c\oo{\lambda +\mu} D\{ a\oo\mu Db \}\}
   - \{ \{ c\oo\lambda Da \} \oo\mu Db \} \\
=
(D(Db\oo\mu a)\oo{\lambda +\mu} c) - (Db\oo\mu(Da\oo\lambda c))
=
((D^2b\oo\mu a)\oo{\lambda +\mu } c) 
= D^2b\oo\mu (a\oo\lambda c).
\end{multline*}
The right-hand sides are equal since 
$x\oo\lambda (y\oo\mu z) = y\oo\mu (x\oo\lambda z)$
in a commutative conformal algebra.
The equation \eqref{eq:RCom-R} can be checked similarly.
\end{proof}

In particular, the elements $L^a\in \Cend V$ determine derivations 
$[L^a\oo\lambda \cdot ]$ of $\Cend V$,
and it follows from \eqref{eq:L-R_commute} that the subalgebra 
$\mathcal R$ 
is invariant under these derivations. 
Suppose $\mathcal J$ is an ideal of $\mathcal R$.
If $[L^v\oo{\lambda} \mathcal J] \subseteq \mathcal J[\lambda ]$ for all $v\in V$ then we say $\mathcal J$ is {\em ad-invariant}.

\begin{lemma}\label{lem:R-left-ideal_Nov}
Let $V$ be a finite Novikov conformal algebra. 
Suppose $I$ is a left ideal of $V$ and 
$\mathcal J$ is an ad-invariant 
ideal of the commutative conformal algebra 
$\mathcal R$.
Then $\mathcal J(I)$ is a two-sided ideal of~$V$.
\end{lemma}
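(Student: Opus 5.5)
The plan is to verify directly that the $H$-submodule $\mathcal J(I)\subseteq V$ is stable under all left and right multiplication operators $L^v_\mu$ and $R^v_\mu$, $v\in V$. An $H$-submodule of $V$ is a two-sided ideal exactly when it is invariant under the algebra $\mathcal M$ generated by $\mathcal L$ and $\mathcal R$. It is therefore enough to check the generators $L^v$ and $R^v$.

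First I will record that $\mathcal J(I)$ is an $H$-submodule. This holds because $f_\alpha(\partial u)=(\partial+\alpha)f_\alpha(u)$ for every conformal endomorphism $f$, as already noted after \eqref{eq:Cend(U)_defn}. Throughout I will work with the coefficient description of $\mathcal J(I)$ rather than the description by values: $\mathcal J(I)[\lambda]$ contains $f_\lambda(u)$ for all $f\in\mathcal J$ and $u\in I$. Consequently any polynomial identity in several formal variables whose right-hand side is assembled from expressions $g_{\nu}(u)$, with $g\in\mathcal J$ and $u\in I$, has all its coefficients in $\mathcal J(I)$.

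Right multiplications come first. For $f\in\mathcal J$ and $u\in I$, the definition of the $\lambda$-product in $\Cend V$ gives
\[
R^v_\mu\bigl(f_\lambda(u)\bigr)=(R^v\oo\mu f)_{\lambda+\mu}(u).
\]
Here $R^v\oo\mu f\in\mathcal J[\mu]$, because $\mathcal J$ is an ideal of $\mathcal R$ and $R^v\in\mathcal R$. Hence all coefficients lie in $\mathcal J(I)$.

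Left multiplications are the only real step, and this is where both hypotheses enter. In the same way,
\[
L^v_\mu\bigl(f_\lambda(u)\bigr)=(L^v\oo\mu f)_{\lambda+\mu}(u).
\]
I will split $L^v\oo\mu f=[L^v\oo\mu f]+\{f\oo\mu L^v\}$. The commutator part lies in $\mathcal J[\mu]$ by ad-invariance of $\mathcal J$, so applying it to $u\in I$ lands in $\mathcal J(I)$. For the second part I will unfold the definition of $\{f\oo\mu L^v\}$ evaluated on $u$. By sesqui-linearity it equals $f_{\nu}\bigl(L^v_{\nu'}(u)\bigr)$ for suitable linear combinations $\nu,\nu'$ of $\lambda,\mu$. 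Since $I$ is a left ideal, every coefficient of $L^v_{\nu'}(u)$ lies in $I$, so the whole expression lies in $\mathcal J(I)$.

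The main obstacle is bookkeeping in this last step. The $\{\cdot\}$-substitution $\mu\mapsto-\partial-\mu$ has to be tracked correctly when the expression is evaluated on $u$, so that it really becomes a composition $f\circ L^v$ with $f$ applied to an element of $I$. I will handle this by writing out $(f\oo{\nu}L^v)_{\lambda+\mu}(u)=f_\nu(L^v_{\lambda+\mu-\nu}(u))$ and then substituting for $\nu$, noting that $\partial$ acts on $V$-valued polynomials, so the substitution is legitimate. If that becomes messy, the fallback is to use the polynomial model $\Cend_n$ and its explicit product formula. Combining the two cases shows that $\mathcal J(I)$ is invariant under $L^v$ and $R^v$ for all $v\in V$, and hence is a two-sided ideal of $V$.
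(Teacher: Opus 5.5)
Your proposal is correct and follows the paper's proof. Right multiplications are absorbed because $\mathcal J$ is an ideal of $\mathcal R$, and for left multiplications you split $L^v\oo\mu f=[L^v\oo\mu f]+\{f\oo\mu L^v\}$, using ad-invariance for the first summand and the left-ideal property of $I$ for the second.

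On the bookkeeping you flag: the $\partial$ in $\{f\oo\mu L^v\}=(f\oo{-\partial-\mu}L^v)$ is the one on $\Cend V$, not on $V$, and it becomes $-(\lambda+\mu)$ at index $\lambda+\mu$. Hence $\{f\oo\mu L^v\}_{\lambda+\mu}(u)=(f\oo{\lambda}L^v)_{\lambda+\mu}(u)=f_\lambda(L^v_\mu(u))$, that is, $\nu=\lambda$ and $\nu'=\mu$.
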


\begin{proof}
We just need to show 
that 
$L^v_\lambda (\mathcal J(I)) \subseteq \mathcal J(I)[\lambda ]$ since 
$\mathcal J(I)$ is already closed with respect to 
right multiplications.

Suppose $f \in \mathcal J$, $a\in I$, $v\in V$.
Then 
\[
L^v_\lambda (f_\mu(a))
= (L^v \oo\lambda f)_{\mu+\lambda }(a) 
= \{f\oo\lambda L^v \}_{\mu+\lambda }(a) 
+ [L^v\oo\lambda f]_{\lambda +\mu }(a).
\]
The coefficients of the second summand belong 
to $\mathcal J(I)$.
The first summand 
$\{f\oo\lambda L^v \}_{\mu+\lambda }(a)$
belongs to $\mathcal J(I)[\lambda,\mu ]$ since
\[
\{f\oo\lambda L^v \}_{\mu+\lambda }(a)
=
(f\oo{-\partial -\lambda} L^v )_{\mu+\lambda }(a)
=
(f\oo{\mu} L^v )_{\mu+\lambda }(a)
=
f_\mu (L^v_\lambda (a))
\]
and $L^v_\lambda (a)\in I[\lambda ]$.
\end{proof}

\begin{corollary}\label{cor:R-ideal-Nov}
If $\mathcal J = \mathcal R$ and $I\ne 0$ then either 
$I$ is an ideal of $V$ such that $I\ooc{(\lambda )}V=0$, or
$\mathcal R(I)$ is a nonzero ideal of $V$.
\end{corollary}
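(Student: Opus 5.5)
The plan is to apply Lemma~\ref{lem:R-left-ideal_Nov} with $\mathcal J=\mathcal R$. First we check that $\mathcal R$ is an ad-invariant ideal of itself. By \eqref{eq:L-R_commute}, $[L^a\oo\lambda R^b]=R^{(a\ooc\lambda b)}-\{R^b\oo\lambda R^a\}$, and both terms lie in $\mathcal R[\lambda]$. Since $[L^a\oo\lambda\cdot]$ is a conformal derivation of $\Cend V$, the Leibniz rule extends this inclusion from the generators $R^b$ to all $\lambda$-products of generators. It is also compatible with $\partial$ by sesqui-linearity. Hence $[L^a\oo\lambda\mathcal R]\subseteq\mathcal R[\lambda]$ for all $a\in V$. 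Lemma~\ref{lem:R-left-ideal_Nov} then gives that $\mathcal R(I)$ is a two-sided ideal of $V$ for every left ideal $I$.

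Next we split into two cases according to whether $\mathcal R(I)$ vanishes. If $\mathcal R(I)\ne 0$, we are in the second alternative and there is nothing more to prove.

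If $\mathcal R(I)=0$, then $R^a_\lambda(u)=\{u\ooc{(\lambda)}a\}=0$ for all $u\in I$ and $a\in V$. Replacing $\lambda$ by $-\partial-\lambda$, this is equivalent to $u\ooc{(\lambda)}a=0$, so $I\ooc{(\lambda)}V=0$. In this case $I$ is trivially closed under right multiplication, since $I\ooc{(\lambda)} V=0\subseteq I[\lambda]$. Being a left ideal, it is then a two-sided ideal annihilated on the right by $V$, which is the first alternative.

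The only delicate point is the extension of ad-invariance from the generators to all of $\mathcal R$. This follows from the derivation property of the commutator together with the fact that $\mathcal R$ is $H$-spanned by iterated $\lambda$-products of the $R^b$.
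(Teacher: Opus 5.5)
Your proof is correct and is the same argument the paper relies on. The paper states this corollary without a separate proof, as the case $\mathcal J=\mathcal R$ of Lemma~\ref{lem:R-left-ideal_Nov}, with ad-invariance of $\mathcal R$ coming from \eqref{eq:L-R_commute} and the derivation property of $[L^a\oo\lambda\cdot]$ exactly as you argue, and your treatment of the case $\mathcal R(I)=0$ (where $R^a_\lambda(u)=0$ for all $a\in V$, $u\in I$ forces $I\ooc{(\lambda)}V=0$, making the left ideal $I$ two-sided) correctly fills in the step the paper leaves implicit.
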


\begin{lemma}\label{lem:R-ideal-square}
Let $V$ be a finite Novikov conformal algebra and let
$\mathcal J$ be an ad-invariant ideal of $\mathcal R$.
Then $\mathcal J(V)\ooc{(\lambda )} \mathcal J(V) 
\subseteq \mathcal J^2(V)[\lambda ]$.
\end{lemma}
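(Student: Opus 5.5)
We need to show that for $f,g\in\mathcal J$ and $a,b\in V$ the coefficients of $f_\alpha(a)\ooc{(\lambda)} g_\beta(b)$ lie in $\mathcal J^2(V)$. Since $\mathcal J(V)$ is spanned by values $f_\alpha(a)$ as in \eqref{eq:Cend(U)_defn}, this reduction suffices.

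We then rewrite this product as a right multiplication applied to $f_\alpha(a)$:
\[
f_\alpha(a)\ooc{(\lambda)} g_\beta(b) = R^{g_\beta(b)}_{-\partial-\lambda}(f_\alpha(a)),
\]
so the task becomes understanding $R^{g_\beta(b)}$ as an element of $\Cend V$.

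The key step is a formula expressing $R^{f_\mu(b)}$ through $f$ and $R^b$. We expect this from the relation \eqref{eq:R-L_product}, or by a direct computation using the definition of $\mathcal R$. Concretely, for $f\in\mathcal R$ a product of generators $R^{c_1}\cdots R^{c_k}$, we have $f_\mu(b)=\{\cdots\{b\ooc{}c_k\}\cdots c_1\}$. Right commutativity \eqref{eq:RCom}, in its $R$-form \eqref{eq:R-R_commute}, should then give
\[
R^{f_\mu(b)} \in \text{(coefficients of) } f\oo{\nu} R^b ,
\]
that is, a right multiplication by $f_\mu(b)$ is $f$ composed with $R^b$ with shifted parameters. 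The argument is by induction on $k$. The base case $R^{\{b\ooc{(\mu)} c\}}$ expressed as $R^c\oo{} R^b$ is exactly a reformulation of \eqref{eq:RCom}: $(x\ooc{}\{b\ooc{} c\}) $ should be handled with left-symmetry, and this is where care is needed.

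Actually the cleaner route is to start from right commutativity: $\{ \{x\ooc{(\lambda)} a\}\ooc{(\mu)} b\}$ is symmetric in $(a,\lambda)$ and $(b,\mu)$. We then need $x\ooc{} (y\ooc{} z)$ with the second argument a product, which is governed by \eqref{eq:L-R_commute}:
\[
R^{(a\ooc\lambda b)} = [L^a\oo\lambda R^b] + \{R^b\oo\lambda R^a\}.
\]
Apply this with $a$ replaced by $f_\alpha(a)$, using \eqref{eq:R-L_product} and $L^{f_\alpha(a)}$. Then $R^{g_\beta(b)}$ for $g=R^c$ is $R^{\{b\ooc{} c\}}$, which is expressed through $R^c$, $R^b$, and $L$-terms via \eqref{eq:L-R_commute} with roles swapped. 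For general $g\in\mathcal J$ we proceed by induction on the length of $g$, using that $\mathcal J$ is an ideal of $\mathcal R$ and ad-invariant, so that commutators $[L^v\oo{}\mathcal J]$ stay in $\mathcal J$.

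Collecting terms, $f_\alpha(a)\ooc{(\lambda)} g_\beta(b)$ should become a combination of
\begin{itemize}
\item[(i)] $(h\oo{} g')_\gamma(a)$ with $h\in\mathcal J$ and $g'\in\mathcal J$, which lie in $\mathcal J^2(V)$; and
\item[(ii)] $(g\oo{} [L^{\cdot}\oo{} f])_\gamma(\cdot)$, which also lie in $\mathcal J^2(V)$ because $[L\oo{}f]\in\mathcal J$ by ad-invariance.
\end{itemize}
The anticipated identity is
\[
f_\alpha(a)\ooc{(\lambda)} g_\beta(b) = \bigl(g\oo{}\,\text{something in } \mathcal J\bigr)(\ldots),
\]
using the calculation from Lemma~\ref{lem:R-left-ideal_Nov}:
\[
L^{v}_\lambda f_\mu = f_\mu L^v_\lambda + [L^v\oo\lambda f]_{\lambda+\mu}.
\]
Applying this with $v=f_\alpha(a)$ and the pair $g,b$ gives
\[
L^{f_\alpha(a)}_\lambda(g_\beta(b)) = g_\beta\bigl(L^{f_\alpha(a)}_\lambda b\bigr) + [L^{f_\alpha(a)}\oo\lambda g]_{\lambda+\beta}(b).
\]

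The two terms are then handled separately. The first term is $g_\beta\bigl(\{\ldots\}\bigr)$, and $L^{f_\alpha(a)}b = R^{b}(f_\alpha(a))$ is a coefficient of $(R^b\oo{} f)(a)$. Hence this term lies in $(g\oo{}(R^b f))(V)\subseteq\mathcal J^2(V)$, since $R^b f\in\mathcal J$. For the second term, the main obstacle is expressing $L^{f_\alpha(a)}$, i.e.\ showing that $[L^{f_\alpha(a)}\oo{} g]$ lies in $\mathcal J\oo{}\mathcal J$. We would first prove, by induction on the word length of $f$ using \eqref{eq:R-L_product} and \eqref{eq:L-R_commute}, that $L^{f_\alpha(a)}$ equals (coefficients of) $f\oo{} L^a$ plus terms in $\mathcal R\oo{} [\mathcal L, f]$-type expressions. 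The commutator with $g$ then falls into $\mathcal J\cdot\mathcal J$ by ad-invariance and the ideal property.
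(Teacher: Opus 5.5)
Your final route is essentially the paper's proof. The decisive input is the identity $L^{f_\alpha(a)}_\gamma = f_\alpha L^a_{\gamma-\alpha}$, which follows by iterating \eqref{eq:R-L_product} alone and holds exactly, with no extra correction terms; with it, your commutator term becomes $f_\alpha[L^a_{\lambda-\alpha},g_\beta](b)$, which lies in $\mathcal J^2(V)$ by ad-invariance, while $[f_\alpha,g_\beta]L^a_{\lambda-\alpha}(b)$ vanishes since $\mathcal R$ is commutative. The only difference is ordering: the paper substitutes this identity first and then moves $L^a$ past $g_\beta$, whereas you move $L^{f_\alpha(a)}$ past $g_\beta$ first and handle $g_\beta(f_\alpha(a)\ooc{(\lambda)} b)$ by the ideal property; the earlier detour through $R^{g_\beta(b)}$ can be dropped, and in any case $R^{\{b\ooc{(\mu)}c\}}$ is not simply a product of $R^c$ and $R^b$, because of left-symmetry.
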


Note that if $\mathcal J\ne 0$ then $\mathcal J(V)$ is a nonzero ideal of $V$.

\begin{proof}
For every $f,g\in \mathcal J$, $u,v\in V$, $\alpha,\beta,\gamma\in \Bbbk $ we have to show 
\[
f_\alpha(u)\ooc{(\gamma )} g_\beta (v) =
L^{f_\alpha(u)}_\gamma g_\beta (v)\in \mathcal J^2(V).
\]
It follows from
\eqref{eq:R-L_product} that 
\begin{equation}\label{eq:R-L_Cend}
L^{f_\alpha(u)}_\gamma = f_\alpha L^u_{\gamma-\alpha } \in \Cend V.
\end{equation}
Indeed, a generic element of $\mathcal R$ may be presented as
\[
f = \sum \partial^s(R^{a_1}\oo{\alpha_1}(R^{a_2}\oo{\alpha_2}\dots (R^{a_n}\oo{\alpha_n} R^{a_{n+1}})\dots )).
\]
Then 
\[
f_\alpha(u) = \sum (-\alpha)^s R^{a_1}_{\alpha_1}R^{a_2}_{\alpha_2}\dots R^{a_n}_{\alpha_n}R^{a_{n+1}}_{\alpha-\alpha_1-\dots -\alpha_n}(u),
\]
and consecutive application of \eqref{eq:R-L_product} 
leads us to \eqref{eq:R-L_Cend}.

Hence, 
\begin{multline*}
f_\alpha(u)\ooc{(\gamma )} g_\beta (v)
=f_\alpha L^u_{\gamma-\alpha}g_\beta (v)
= f_\alpha g_\beta L^u_{\gamma-\alpha }(v)
+f_\alpha [L^u_{\gamma-\alpha }, g_\beta](v)  \\
=
(f\oo\alpha g)_{\beta+\alpha}(u\ooc{(\gamma-\alpha)} v)
+
(f\oo{\alpha} [L^u\oo{\gamma-\alpha}g])_{\beta+\gamma}(v).
\end{multline*}
Both summands in the right-hand side of this expression belong to $\mathcal J^2(V)$.
\end{proof}

\begin{remark}
Note that if $\mathcal J$ is an ad-invariant ideal of $\mathcal R$ then so is $\mathcal J^2$. Therefore, if $\mathcal J$ is nilpotent then $\mathcal J(V)$ is solvable.
\end{remark}

\begin{proposition}\label{prop:R-Semisimple}
If $V$ is a finite (semi)simple Novikov conformal algebra
then $\mathcal R$ is a semisimple subalgebra of $\Cend V$.
\end{proposition}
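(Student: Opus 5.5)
We argue by contradiction, using the structure theory of Theorem~\ref{thm:FFRClassif} together with Lemma~\ref{lem:DerivNilp} and Lemma~\ref{lem:R-ideal-square}.

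First, $\mathcal R$ is a conformal subalgebra of $\Cend V$ with $V$ finite, so it has an FFR. It is also commutative by \eqref{eq:R-R_commute}. Suppose $\mathcal R$ is not semisimple. By Theorem~\ref{thm:FFRClassif}(3), $\mathcal R$ has a nonzero maximal nilpotent ideal $N$. (A nonzero solvable ideal of an associative conformal algebra with an FFR contains a nonzero nilpotent ideal, so it suffices to rule out $N\neq 0$.) The plan is to show that $N$ is ad-invariant and then to use $N(V)$ to produce a nonzero solvable ideal of $V$.

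\emph{Step 1: ad-invariance.} For each $v\in V$, the map $[L^v\oo\lambda\cdot]$ restricted to $\mathcal R$ is a conformal derivation of $\mathcal R$, because $\mathcal R$ is invariant under it by \eqref{eq:L-R_commute}. To apply Lemma~\ref{lem:DerivNilp} we need $\mathcal R$ to be weakly Noetherian. I would deduce this from the FFR structure: an associative conformal algebra with an FFR is a finitely generated torsion-free-ish module inside $\Cend V$, which is Noetherian as a module over a suitable polynomial ring (it sits inside $M_n(\Bbbk[\partial,x])$ modulo torsion). Then ideals of $\mathcal R$ correspond to submodules over a Noetherian ring, so ascending chains stabilize. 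Lemma~\ref{lem:DerivNilp} then shows that $N+[L^v\oo{\alpha} N]$ is a nilpotent ideal. By maximality of $N$ it lies in $N$, so $N$ is ad-invariant.

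\emph{Step 2: a nonzero solvable ideal of $V$.} By Lemma~\ref{lem:R-left-ideal_Nov} with $I=V$, the set $N(V)$ is an ideal of $V$. It is nonzero, since $N\neq 0$ consists of nonzero conformal endomorphisms of $V$. Powers of an ad-invariant ideal are again ad-invariant, and Lemma~\ref{lem:R-ideal-square} applied iteratively gives $N(V)^{(k+1)}\subseteq N^{2^k}(V)$. Nilpotency of $N$ therefore makes $N(V)$ solvable, as in the Remark. This contradicts the semisimplicity of $V$.

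In the simple case we must also ensure that $V$ itself is not solvable. This holds because $V\ooc{(\lambda)}V\neq0$ and $V^{(2)}$ is a nonzero ideal, hence equal to $V$. So simple implies semisimple, and the argument above covers both cases.

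\emph{Main obstacle.} I expect the delicate point to be the Noetherian hypothesis in Step~1, together with the existence of a maximal nilpotent ideal that dominates every nilpotent ideal. Theorem~\ref{thm:FFRClassif}(3) supplies the latter. For weak Noetherianity I would first try to show that $\Cend V$ is a finitely generated module over $\Bbbk[\partial,x]$ and that every ideal of $\mathcal R$ generates a submodule. Failing that, I would avoid Noetherianity altogether: I would take $J$ to be the sum of all ad-invariant nilpotent ideals and argue directly that $J$ is nonzero, starting from $N$ via a finite chain as in the proof of Lemma~\ref{lem:DerivNilp}, but bounded using the finite rank of $\Cend V$.
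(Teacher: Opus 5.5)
Your overall architecture is the paper's. You assume the nilpotent radical $N$ of $\mathcal R$ is nonzero, make it ad-invariant via Lemma~\ref{lem:DerivNilp}, obtain the ideal $N(V)$ from Lemma~\ref{lem:R-left-ideal_Nov}, and show it is solvable by Lemma~\ref{lem:R-ideal-square}. Step~2 and your remark on the simple case are fine.

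The step that does not work as written is your justification that $\mathcal R$ is weakly Noetherian. Ideals of $\mathcal R$ are only $H$-submodules closed under $\lambda$-products with $\mathcal R$. They need not be $\Bbbk[\partial,x]$-submodules of $M_n(\Bbbk[\partial,x])$, since nothing forces stability under multiplication by $x$. Also, a subalgebra of $\Cend_n$ need not be finitely generated over $H$. In fact an associative conformal algebra with an FFR need not be weakly Noetherian at all. Take the strictly upper triangular part $E_{12}\Bbbk[\partial,x]\subset\Cend_2$. It has zero multiplication, so every $H$-submodule is an ideal, and $\Bbbk[\partial,x]=\bigoplus_{k\ge 0}Hx^k$ is free of infinite rank over $H$. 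Hence ascending chains of ideals need not stabilize. Your fallback (``finite rank of $\Cend V$'') fails for the same reason: $\Cend_1=\Bbbk[\partial,x]$ already has infinite rank over $H$.

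The gap can be repaired in two ways.
\begin{enumerate}
\item The paper's route: $\mathcal R$ is generated as a conformal algebra by $R^{a_1},\dots,R^{a_n}$ for $H$-generators $a_i$ of $V$, because $\rho$ is $H$-linear. It is commutative by \eqref{eq:R-R_commute}. Finitely generated commutative conformal algebras are Noetherian, a fact the paper quotes from the literature.
\item A cheaper route that needs no chain condition. Your Step~1 only uses that $N+\varphi_\alpha(N)$ is a nilpotent ideal for a \emph{fixed} $\alpha$. The first half of the proof of Lemma~\ref{lem:DerivNilp} establishes exactly this without any Noetherian hypothesis. Since $\mathcal R/N$ is semisimple by Theorem~\ref{thm:FFRClassif}(3), $N$ contains every nilpotent ideal of $\mathcal R$. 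Therefore $\varphi_\alpha(N)\subseteq N$ for every $\alpha\in\Bbbk$, and since $\Bbbk$ is infinite this is ad-invariance.
\end{enumerate}
With either repair, the rest of your argument goes through unchanged.
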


\begin{proof}
Assume the converse: there exists a nonzero nilpotent ideal in $\mathcal R$.
Then the nilpotent radical $\mathcal N$ 
(i.e., the maximal nilpotent ideal)
of $\mathcal R$ is nonzero. 

Since $\mathcal R$ is generated by the operators of right multiplication in $V$ and the latter is a finite conformal algebra, we may conclude that $\mathcal R$ is a finitely generated commutative conformal algebra. Hence, $\mathcal R$ is Noetherian \cite{Kol2001_envelope}. 
By Lemma~\ref{lem:DerivNilp}, the maximal nilpotent ideal 
$\mathcal N$ of $\mathcal R$ is invariant with respect to all conformal derivations of $\mathcal R$.

In particular, it works for 
$\varphi_\lambda = [L^u\oo\lambda \cdot ]$,
$u\in V$.
By Lemma~\ref{lem:R-left-ideal_Nov}
$\mathcal N(V)$ is an ideal of $V$
which is solvable by Lemma~\ref{lem:R-ideal-square}.

Therefore, $\mathcal N=0$ as desired.
\end{proof}

\section{Simple and semisimple finite Novikov conformal algebras}

Throughout the section, let $V$ be a simple 
finite Novikov conformal algebra. 
Then $V$ is a torsion-free $H$-module since 
the $H$-torsion is always an annihilator ideal in a conformal algebra (see, e.g., \cite{KacVABeginn}).

Suppose $\rho : V\to \mathcal R\subseteq \Cend V$
is the right regular representation of $V$, i.e., 
\[
\rho(a) = R^a, \quad a\in V.
\]
This is an $H$-linear map. Denote by $A$ the kernel of $\rho $, i.e., the space of all $a\in V$ such that $V\ooc{(\lambda)} a =0$.

\begin{lemma}\label{lem:RAnn=ideal}
If $a\in A$, $a\ne 0$ then $\mathcal R(Ha)=V$.
\end{lemma}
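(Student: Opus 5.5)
The plan is to apply Corollary~\ref{cor:R-ideal-Nov} to the $H$-submodule $I=Ha$. Once we know that $Ha$ is a left ideal of $V$, the corollary says that either $Ha$ is an ideal with $Ha\ooc{(\lambda)}V=0$, or $\mathcal R(Ha)$ is a nonzero ideal of $V$. Simplicity of $V$ then finishes the argument. In the first alternative, $Ha$ would be a nonzero ideal, hence equal to $V$. Since $V\ooc{(\lambda)}a=0$, sesqui-linearity gives $V\ooc{(\lambda)}V=V\ooc{(\lambda)}Ha=0$, which contradicts $(V\ooc{(\lambda)}V)\ne 0$. So the second alternative holds: $\mathcal R(Ha)$ is a nonzero ideal, and therefore $\mathcal R(Ha)=V$.

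It remains to check that $Ha$ is a left ideal, i.e.\ $V\ooc{(\lambda)}Ha\subseteq Ha[\lambda]$. This is immediate, since $v\ooc{(\lambda)}h(\partial)a=h(\partial+\lambda)(v\ooc{(\lambda)}a)=0$ for all $v\in V$, $h\in H$. So $Ha$ is in fact a left ideal with zero left action, and $A$ is itself a left ideal of $V$.

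The point needing care is applying the corollary to the left-ideal case with $\mathcal J=\mathcal R$. For this we need that $\mathcal R$ is ad-invariant, which follows from \eqref{eq:L-R_commute}: $[L^v\oo\lambda R^b]\in\mathcal R[\lambda]$, and a derivation preserving generators preserves the generated subalgebra. We also need to be sure which alternative is excluded. If $\mathcal R(Ha)=0$, then $\{a\ooc{(\lambda)}V\}=0$, so $a$ annihilates $V$ from both sides. In that case $Ha$ is a two-sided ideal with trivial products into it and out of it, hence equal to $V$ by simplicity, and so $V\ooc{(\lambda)}V=0$, a contradiction. The main obstacle is therefore only verifying that Lemma~\ref{lem:R-left-ideal_Nov} really yields a two-sided ideal here; its proof uses $L^v_\lambda(a)\in I[\lambda]$, which holds trivially since $L^v_\lambda(a)=0$.
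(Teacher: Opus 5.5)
Your proof is correct and takes the same route as the paper. You note that $Ha$ is a left ideal because $V\ooc{(\lambda)}a=0$, use Lemma~\ref{lem:R-left-ideal_Nov} (packaged as Corollary~\ref{cor:R-ideal-Nov}) to make $\mathcal R(Ha)$ an ideal, and exclude $\mathcal R(Ha)=0$ because $a$ would then annihilate $V$ on both sides. The only cosmetic difference is that you argue $Ha$ itself would be a nonzero ideal equal to $V$ with $V\ooc{(\lambda)}V=0$, whereas the paper takes the whole annihilator ideal.
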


\begin{proof}
If $a\in A$ then $Ha $ is a left ideal of $V$:
$V\ooc{(\lambda)} a = 0$.
By Lemma~\ref{lem:R-left-ideal_Nov}, $\mathcal R(Ha)$
is an ideal of $V$, so 
either $\mathcal R(Ha)=0$ or $\mathcal R(Ha)=V$.
If the first option works for at least one 
$A\ni a\ne 0$
then the space of all such $a\in A$
is an annihilator ideal of $V$.
Therefore,  $\mathcal R(Ha)=V$
for all $A\ni a\ne 0$.
\end{proof}

\begin{corollary}\label{cor:RankOne}
The conformal algebra $\mathcal R$ is isomorphic 
to $\Cur_1 = \Cur\Bbbk $.
\end{corollary}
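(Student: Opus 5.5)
The plan is to combine the semisimplicity of $\mathcal R$ (Proposition~\ref{prop:R-Semisimple}) with the classification in Theorem~\ref{thm:FFRClassif}, and then to use simplicity of $V$ to leave only one simple summand.

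\emph{Step 1: $\mathcal R$ is a nonzero commutative conformal algebra with an FFR.} It is a subalgebra of $\Cend V$ with $V$ finite, so it has an FFR. By \eqref{eq:R-R_commute} it is commutative. It is nonzero: if all $R^a$ vanished, then $V\ooc{(\lambda)}V=0$, contradicting simplicity of $V$.

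\emph{Step 2: every simple summand is $\Cur_1$.} Proposition~\ref{prop:R-Semisimple} says $\mathcal R$ is semisimple. Theorem~\ref{thm:FFRClassif}(2) then gives $\mathcal R=\mathcal R_1\oplus\dots\oplus\mathcal R_k$ with $k\ge 1$ and each $\mathcal R_i$ simple. By Theorem~\ref{thm:FFRClassif}(1), each $\mathcal R_i$ is $\Cur_n$ or $\Cend_{n,Q}$, and it must also be commutative.
\begin{itemize}
\item $\Cur_n=M_n(\Bbbk[\partial])$ is commutative only for $n=1$.
\item $\Cend_{n,Q}$ is never commutative; this is a direct check with the product $a(\partial,x)\oo\lambda b(\partial,x)=a(-\lambda,x)b(\partial+\lambda,x+\lambda)$.
\item For $n=1$, take $a=Q(x-\partial)$ and $b=xQ(x-\partial)$. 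Compare $a\oo\lambda b$ with $\{b\oo\lambda a\}$: the factor $x$ appears with different shifts, so the two products differ.
\end{itemize}
Hence every $\mathcal R_i\cong\Cur_1=He_i$ with $e_i\oo\lambda e_i=e_i$.

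\emph{Step 3: each summand is ad-invariant.} Let $\varphi=[L^v\oo\lambda\cdot]$, which is a conformal derivation of $\mathcal R$ by \eqref{eq:L-R_commute}. For $a,b\in\mathcal R_i$,
\[
\varphi_\lambda(a\oo\mu b)=(\varphi_\lambda(a)\oo{\lambda+\mu}b)+(a\oo\mu\varphi_\lambda(b)).
\]
Both terms lie in $\mathcal R_i$, since $\mathcal R_i$ is an ideal of $\mathcal R$. Since $e_i\oo\mu e_i=e_i$, the coefficients of $\mathcal R_i\oo\mu\mathcal R_i$ span $\mathcal R_i$. Therefore $\varphi(\mathcal R_i)\subseteq\mathcal R_i$, i.e., $\mathcal R_i$ is an ad-invariant ideal.

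\emph{Step 4: $k=1$.} Apply Lemma~\ref{lem:R-left-ideal_Nov} with $I=V$: each $\mathcal R_i(V)$ is an ideal of $V$. It is nonzero because $\mathcal R_i\neq0$ acts faithfully on $V$. By simplicity, $\mathcal R_i(V)=V$ for every $i$. Suppose $k\ge2$. Since $\mathcal R_2\oo\lambda\mathcal R_1=0$,
\[
V=\mathcal R_2(V)=\mathcal R_2(\mathcal R_1(V))\subseteq(\mathcal R_2\oo\lambda\mathcal R_1)(V)=0,
\]
a contradiction. Hence $k=1$ and $\mathcal R\cong\Cur_1$.

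The main obstacle is Step 3. Lemma~\ref{lem:DerivNilp} only controls nilpotent ideals, not simple summands, so ad-invariance has to come from $\mathcal R_i$ being idempotent (unital) instead. The non-commutativity check in Step 2 is routine but has to be carried out.
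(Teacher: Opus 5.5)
Your proposal is correct and follows the paper's proof: semisimplicity of $\mathcal R$, the FFR classification and commutativity give $\mathcal R\cong\Cur_1\oplus\dots\oplus\Cur_1$; ad-invariance of each summand gives $\mathcal R_i(V)=V$; and $\mathcal R_2\oo\lambda\mathcal R_1=0$ rules out more than one summand. Your Steps 2 and 3 fill in two points the paper merely asserts: the non-commutativity of $\Cur_n$ for $n\ge 2$ and of every $\Cend_{n,Q}$, and the invariance of each summand under conformal derivations, which you derive from $\mathcal R_i=\mathcal R_i\oo{\mu}\mathcal R_i$.
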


\begin{proof}
As a semisimple associative conformal algebra with 
FFR, $\mathcal R$ has to be isomorphic to a direct sum of simple ones according to Theorem~\ref{thm:FFRClassif}. Since 
$\mathcal R$ is commutative by Proposition~\ref{prop:Conf_Comm_Novikov},  
so are all the simple summands, but the only commutative simple conformal algebra with FFR is $\Cur_1$.

Hence, $\mathcal R$ is a direct sum of several copies of $\Cur_1$: 
$\mathcal R = \mathcal R_1\oplus \dots \oplus \mathcal R_m$, $\mathcal R_i\cong \Cur_1$.
Each $\mathcal R_i$ is an ideal of $\mathcal R$
which is closed under all conformal derivations 
of $\mathcal R$, in particular, under the derivations $[L^v\oo\lambda \cdot ]$, $v\in V$.
Hence, 
the $H$-submodule $\mathcal R_i(V)$ is $\mathcal M$-invariant, i.e., $V=\mathcal R_i(V)$ for every 
$i=1,\dots, m$.

Assume $m>1$. Then $V=\mathcal R_1(V)=\mathcal R_2(V)$ implies $V=\mathcal R_1(\mathcal R_2(V))=(\mathcal R_1\mathcal R_2)(V)=0$, 
a contradiction. Hence, $\mathcal R\cong \Cur_1$.
\end{proof}

\begin{theorem}\label{thm:RankOne}
Let $V$ be a simple Novikov conformal algebra 
over an algebraically closed field $\Bbbk $ 
of characteristic zero. Then  $V$ is an $H$-module of rank~1.
\end{theorem}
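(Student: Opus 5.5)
The plan is to work entirely with the right regular representation $\rho\colon V\to\mathcal R$ and the identification $\mathcal R\cong\Cur_1$ from Corollary~\ref{cor:RankOne}.

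\emph{Setup.} Since $V$ is simple, it is torsion-free and finitely generated over the PID $H$. Hence $V\cong H\otimes\Bbbk^n$ is free, and it suffices to show $n=1$. The map $\rho$ is $H$-linear with kernel $A$, so
\[
n=\operatorname{rank}\rho(V)+\operatorname{rank}A .
\]
Because $\rho(V)\subseteq\mathcal R\cong\Cur_1$, which has rank one, we get $\operatorname{rank}\rho(V)\le 1$. If $A=0$, then $V$ embeds into a rank-one module, so $n=1$ and we are done. The real work is therefore the case $A\neq 0$.

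\emph{The generator of $\mathcal R$.} Let $e$ be the generator of $\mathcal R=He$, with $e\oo\lambda e=e$. The multiplication rule $(f\oo\lambda g)_\mu=f_\lambda g_{\mu-\lambda}$ in $\Cend V$ turns this into the operator identity
\[
e_\lambda e_\nu=e_{\lambda+\nu}.
\]
Taking $\lambda=\nu=0$ shows that $E=e_0$ is an idempotent $H$-linear endomorphism of $V$. Writing $e_\lambda=\sum_k\lambda^k E_k$, the identity says $E_k=E\,D^k/k!$ for a single operator $D=E_1$ commuting with $E$; in effect $e_\lambda=E\exp(\lambda D)$, and $D$ is nilpotent on the relevant part because $e_\lambda$ is polynomial in $\lambda$. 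The sesquilinearity $e_\lambda(\partial v)=(\partial+\lambda)e_\lambda(v)$ gives $[E,\partial]=0$ and $E[D,\partial]=E$. Thus $E\,\partial^{-}$-conjugation by $D$ acts on $E(V)$ like a translation operator, which is a strong rigidity condition.

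\emph{Case $A\ne0$.} By Lemma~\ref{lem:RAnn=ideal}, $V=\mathcal R(Ha)$ for every nonzero $a\in A$. By \eqref{eq:Cend(U)_defn} and $\mathcal R=He$, this module is the $H$-span of the coefficients of $e_\lambda(a)$, namely the vectors $ED^ka$ for $k\ge0$. In particular $V=E(V)$, so $E=\mathrm{id}$. Then $e_\lambda=\exp(\lambda D)$, and the relation $[D,\partial]=1$ holds on $V$. But $D$ is locally nilpotent (polynomiality) and $V$ is a free $H$-module of finite rank, so $V$ is a finitely generated module over the Weyl algebra in $\partial,D$ on which $D$ acts locally nilpotently. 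Such a module is a direct sum of copies of $\Bbbk[\partial]$ with $D=d/d\partial$.

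The condition $V=H\cdot\{D^ka\}$ then forces $V$ to be generated over $H$ by the finitely many $D^ka$. I would combine this with the Novikov relation \eqref{eq:R-L_product}, $L^{\{b\ooc\lambda a\}}=e$-multiples of $L^b$. This relation shows that every $\rho(v)$ is an $H$-multiple of $e$. It would give a nonzero $v$ with $R^v=\exp(\lambda D)$, hence $\{x\ooc{(\lambda)}v\}=\exp(\lambda D)x$ for all $x$.

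Then I would take a nonzero $a\in A$ together with $\ker D$. The subspace spanned over $H$ by $\ker D\cap(\text{suitable})$ should be an ideal by Lemma~\ref{lem:R-left-ideal_Nov} applied to the left ideal $Ha$. Comparing it with $V$ via simplicity should give $\operatorname{rank}A=0$, a contradiction, so this case cannot occur. Concretely, I expect to show $A\cap\ker D$ spans a proper nonzero ideal unless $n=1$.

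\emph{Main obstacle.} The hardest step is pinning down the shape of $e_\lambda$ inside $\Cend_n$, i.e. classifying embeddings of $\Cur_1$ into $\Cend_n$ (the idempotent $E$ and nilpotent $D$ above), and then ruling out $A\neq0$ with $n>1$. What I would try first is to use the matrix presentation $e(\partial,x)$ with $e(-\lambda,x)e(\partial+\lambda,x+\lambda)=e(\partial,x)$ to reduce $e$, up to an automorphism of $\Cend_n$, to a constant idempotent matrix, and then read off the rank directly.
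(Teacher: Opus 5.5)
Your preliminary steps are correct, and they point to a route different from the paper's. From $e\oo\lambda e=e$ you get $e_\lambda=E\exp(\lambda D)$. The equality $V=\mathcal R(Ha)$ forces $E=\mathrm{id}$, so $[D,\partial]=1$ with $D$ locally nilpotent, and hence $V=H\otimes_{\Bbbk}\ker D$. In a basis $u_1,\dots,u_n$ of $\ker D$ the generator $e$ is the identity matrix of $\Cur_n$. Therefore $R^{u_j}=h_j(\partial)e$ and $u_i\ooc{(\lambda)}u_j=h_j(\partial+\lambda)u_i$ for some $h_j\in H$.

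\textbf{The gap.} The decisive step, excluding $n\ge 2$ when $A\ne 0$, is only announced (``should be an ideal'', ``I expect to show''). The mechanism you suggest cannot work from the facts you invoke. Commutativity of $\mathcal R$, relation \eqref{eq:R-L_product} (which is just right commutativity), $\mathcal R\cong\Cur_1$ and $\mathcal R(Ha)=V$ all hold for $V=Hu_1\oplus Hu_2$ with $u_i\ooc{(\lambda)}u_1=u_i$ and $u_i\ooc{(\lambda)}u_2=(\partial+\lambda)u_i$. In that structure:
\begin{itemize}
\item $R^{u_1}=e$ and $R^{u_2}=\partial e$, where $e$ is the identity matrix;
\item $A=H(\partial u_1-u_2)$, and $\mathcal R(Ha)=V$ for every $0\ne a\in A$;
\item yet $n=2$ and $A\cap\ker D=0$.
\end{itemize}
What rules this out is left symmetry \eqref{eq:LSym}, and your argument never uses it in an essential way. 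Applying Lemma~\ref{lem:R-left-ideal_Nov} to $Ha$ only gives back $\mathcal R(Ha)=V$. A smaller point: $\rho(V)=Hh_0(\partial)e$ need not contain $e$, so a $v$ with $R^v_\lambda=\exp(\lambda D)$ need not exist.

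\textbf{How to close it.} The missing step is short once you have this normal form. Substitute $a=u_i$, $b=u_j$, $c=u_k$ with $i\ne j$ into \eqref{eq:LSym}. Since $u_i,u_j$ are $H$-independent, you get $h_k(\partial+\lambda+\mu)\bigl(h_j(-\mu)-h_j(\partial+\lambda)\bigr)=0$. Some $h_k\ne 0$ (otherwise $V\ooc{(\lambda)}V=0$), so every $h_j$ is a constant. Then $V\cong\Cur N$ for $N=\ker D$ with $xy=\phi(y)x$ and $\phi\ne 0$. For $n\ge 2$, $H\otimes\ker\phi$ is a nonzero proper ideal; it is exactly $H(A\cap\ker D)$, which is now nonzero.

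\textbf{Comparison with the paper.} Completed this way, your proof differs from the paper's. The paper uses a second $H$-linear map $\ell\colon V\to\CDer\mathcal R\cong\mathrm{Vir}$ to bound the rank by $2$: a nonzero $a\in A\cap\ker\ell$ would annihilate $V$. It then rules out rank $2$ by an explicit $2\times 2$ matrix computation. Your Weyl-algebra normalization of $\mathcal R$ plus one application of left symmetry would replace both steps.
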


\begin{proof}
As above, $\mathcal M$ stands for the conformal 
subalgebra of $\Cend V$ generated by all operators of left 
($L^v $) and right ($R^v$) multiplication. 
Recall that if $A=\{a\in V \mid R^a = 0\}$
then for every $0\ne a\in A$
we have $\mathcal R(Ha)=V$ by Lemma~\ref{lem:RAnn=ideal}.

Consider the map
\[
\ell : V\to \CDer\mathcal R, \quad v\mapsto [L^v\oo\lambda \cdot ].
\]
By Example \ref{exmp:CDerCur}, the rank of the $H$-module 
$\CDer \mathcal R$
is equal to 1.

Therefore, we have two $H$-linear maps 
$\rho : V\to \mathcal R\cong \Cur_1$
and 
$\ell: V\to \CDer \mathcal R\cong \mathrm{Vir}$.
If the rank of $V$ is greater than 2 then the intersection 
of two kernels $A =\ker \rho $ and $\ker\ell$ is nonzero since each of them is of corank~1.

Assume $0\ne a\in A\cap \ker\ell$. Then 
$\mathcal R(Ha)=V$, so 
\[
L^a_\lambda (V) = L^a_\lambda (\mathcal R(Ha))
=\mathcal R(L_\lambda^a Ha) = 0 
\]
since $a\ooc{(\lambda )} a =0$. 
In this case, $a$ belongs to the two-sided annihilator of $V$, a contradiction to simplicity of~$V$.

Hence, the rank of $V$ does not exceed~2.
Assume the rank of $V$ is equal to~2.
Then there exists a basis $e_1,e_2$ of $V$ over $H$
such that $\rho (e_2)=0$, i.e., $R^{e_2} =0$.
The conformal algebra structure on $V$ is then completely defined by $R^{e_1}\in \Cend_2$. 
Suppose 
\[
R^{e_1} = \begin{pmatrix}
    a(x,x-\partial) & f(x,x-\partial ) \\
    b(x,x-\partial ) & g(x,x-\partial )
\end{pmatrix},
\]
where $a,b,f,g\in \Bbbk [x,\partial ]$. 
In particular, 
$\{e_2\ooc{(\lambda)} e_1\} = R^{e_1}_\lambda (e_2) = f(\partial,\partial+\lambda )e_1 + g(\partial, \lambda +\partial)e_2$,
so 
$e_2\ooc{(\lambda )} e_1 = f(\partial,-\lambda )e_1+g(\partial, -\lambda )e_2$.
Hence,  
\[
L^{e_1} = 
\begin{pmatrix}
    a(x,\partial ) & 0 \\
    b(x,\partial) & 0
\end{pmatrix},
\quad 
L^{e_2} = 
\begin{pmatrix}
    f(x,\partial) & 0 \\
    g(x,\partial) & 0
\end{pmatrix}.
\]
Note that $f\ne 0$: otherwise, 
$He_2$ is a non-trivial ideal of $V$.

By \eqref{eq:L-R_commute}, 
$[L^{e_2}\oo\lambda R^{e_1}] = R^{(e_2\ooc{(\lambda)} e_1)} = f(\partial, -\lambda )R^{e_1}$.
The latter leads us to the $2\times 2$ matrix equation
\begin{equation}\label{eq:LR-comm-V2}
L^{e_2}\oo\lambda R^{e_1} - R^{e_1}\oo{-\partial-\lambda } L^{e_2}
= f(\partial, -\lambda ) R^{e_1},
\end{equation}
i.e.,
\begin{multline*}
\begin{pmatrix}
    f(x,-\lambda) & 0 \\
    g(x,-\lambda) & 0
\end{pmatrix}
\begin{pmatrix}
    a(x+\lambda,x-\partial) & f(x+\lambda,x-\partial ) \\
    b(x+\lambda,x-\partial ) & g(x+\lambda,x-\partial )
\end{pmatrix} 
\\
-
\begin{pmatrix}
    a(x,x-\partial-\lambda) & 
    f(x,x-\partial -\lambda) \\
    b(x,x-\partial-\lambda ) &
    g(x,x-\partial-\lambda )
\end{pmatrix}
\begin{pmatrix}
    f(x-\partial-\lambda
    ,-\lambda) & 0 \\
    g(x-\partial-\lambda ,-\lambda) & 0
\end{pmatrix}
\\
=
f(\partial,-\lambda )
\begin{pmatrix}
    a(x,x-\partial) & f(x,x-\partial ) \\
    b(x,x-\partial ) & g(x,x-\partial )
\end{pmatrix}.
\end{multline*}
Consider the (1,2)-component of this matrix equation:
\[
f(x,-\lambda ) f(x+\lambda,x-\partial ) 
= f(\partial,-\lambda ) f(x,x-\partial).
\]
This is a quadratic equation for a polynomial 
$f(x,y)$ in two formal variables $x$, $y$. 
Let us make a substitution 
$\partial \to x-\partial $ to get an equivalent 
equation
\begin{equation}\label{eq:12-f-equation}
f(x,-\lambda ) f(x+\lambda,\partial ) 
= f(x-\partial,-\lambda ) f(x,\partial).
\end{equation}
It is obvious that $f(x,y)=q(y)$
is a solution of this equation for every 
$q\in \Bbbk [y]$. Let us show that there are no 
other solutions.

Assume $f(x,y)\ne 0$ is a solution of \eqref{eq:12-f-equation}. Then put $\lambda =0$ 
to obtain $f(x,0)=f(x-\partial, 0)$.
The latter means $f(x,0)=c\in \Bbbk $. 
Expand left- and right-hand sides of 
\eqref{eq:12-f-equation}
by the Taylor's formula and compare the terms at 
$\lambda $:
\[
\big (
c - \lambda f'_y(x,0) +\lambda^2(\dots )
\big )
\big (
f(x,\partial ) + \lambda f'_x(x,\partial ) +
\lambda ^2(\dots )
\big ) \\
=
f(x,\partial )
\big (
c -\lambda f'_y(x-\partial, 0) 
+\lambda^2(\dots ) 
\big ),
\]
\[
c f'_x(x,\partial ) = f(x,\partial )f'_y(x,0) -f(x,\partial)f'_y(x-\partial, 0).
\]
If $c\ne 0$ then we obtain that $f(x,y)$ divides 
its derivative
$f'_x(x,y)$ 
which is only possible for $f'_x(x,y)=0$, i.e., 
$f(x,y)=q(y)$ as desired. 
If $c=0$ then $f(x,y)=y \hat f(x,y)$ for some 
$\hat f\in \Bbbk [x,y]$ of smaller degree.
It is obvious that $\hat f$ satisfies the same equation \eqref{eq:12-f-equation}, 
so $\hat f(x,y)$ depends only on $y$ by induction reasoning.
Therefore, $f(x,y)=q(y)\in \Bbbk[y] $,
 so 
\[
R^{e_2} = \begin{pmatrix}
    * & q(x-\partial ) \\ * & *
\end{pmatrix},
\quad q\ne 0.
\]

Recall that $R^{e_1}$ is a generator of the 
conformal subalgebra $\mathcal R\subset \Cend_2$ isomorphic to $\Cur_1$.
Hence, 
\[
R^{e_1} = h(\partial )e,
\]
where $h(\partial )\in H$,
$e=e(x,\partial )\in \Cend_2$ 
is an idempotent: $e\oo\lambda e = e$. Moreover, we know from 
Lemma \ref{lem:RAnn=ideal} that $\mathcal R(He_2)=V$. 
Since $(e\oo{0} \cdot )=\mathrm{id}_{\mathcal R}$, 
we conclude 
$e(x,0)=I$, the identity 
$2\times 2$-matrix,
$e_{12}(x,0)=0$. 
Therefore, 
\[
R^{e_2}_0 = h(0) e(\partial, 0) \in M_2(H),
\]
but 
$h(\partial) e_{12}(x,\partial ) = f(x,x-\partial) = q(x-\partial )$
implies $h = 0$, 
a contradiction. 

As a result, the hypothesis $\mathrm{rank}\,(V) \ge 2$ 
contradicts to the simplicity of $V$, so the only option is 
$\mathrm{rank}\,(V)=1$.
\end{proof}

\begin{proposition}\label{eq:Rank-1-classification}
Let $V = Hv$ be the free $H$-module of rank one 
equipped with a sesqui-linear $\lambda $-bracket
\[
v\ooc{(\lambda)} v = f(\partial,\lambda ) v.
\]
Then $V$ is a Novikov algebra if and only if 
$f=\beta(\partial+\lambda ) + \alpha$,
$\alpha,\beta \in \Bbbk $.
\end{proposition}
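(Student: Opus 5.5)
Both directions reduce to polynomial identities obtained by writing \eqref{eq:LSym} and \eqref{eq:RCom} for $a=b=c=v$ and expanding with the sesqui-linearity rules \eqref{eq:sesqui-lin}. Let $v\ooc{(\lambda)} v = f(\partial,\lambda)v$. Then
\[
(v\ooc{(\lambda)} v)\ooc{(\lambda+\mu)} v = f(-\lambda-\mu,\lambda)f(\partial,\lambda+\mu)v,
\qquad
v\ooc{(\lambda)}(v\ooc{(\mu)} v) = f(\partial+\lambda,\mu)f(\partial,\lambda)v .
\]
Hence \eqref{eq:LSym} becomes
\[
f(-\lambda-\mu,\lambda)f(\partial,\lambda+\mu) - f(\partial+\lambda,\mu)f(\partial,\lambda)
= f(-\lambda-\mu,\mu)f(\partial,\lambda+\mu) - f(\partial+\mu,\lambda)f(\partial,\mu).
\]
For \eqref{eq:RCom}, the left side is $f(-\lambda-\mu... )$; more precisely $(v\ooc{(\lambda)}v)\ooc{(\mu)}v=f(-\mu,\lambda)f(\partial,\mu)v$. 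The right side is $\{(v\ooc{(\lambda)} v)\ooc{(\mu-\lambda)} v\}$ with $\mu-\lambda$ replaced by $-\partial-(\mu-\lambda)$ after computing, which gives $f(\partial+\mu-\lambda,\lambda)f(\partial,-\partial-\mu+\lambda)v$; I will recheck these substitutions carefully.

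For the \textbf{sufficiency} direction, I plug in $f=\beta(\partial+\lambda)+\alpha$ and verify both identities directly. A shortcut is available: this $f$ is exactly the $\lambda$-product of $C^{(\partial,\alpha)}$ for $C=\Cur\Bbbk$ (Example~\ref{exmp:Differential}) when $\beta\ne0$, after rescaling $v$. It is also $\mathcal V_\alpha$ up to sign and rescaling. When $\beta=0$ it is $\Cur$ of the one-dimensional Novikov algebra $v\circ v=\alpha v$. So sufficiency follows from the earlier examples, with scalar rescalings checked.

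For the \textbf{necessity} direction, I start from the right-commutativity equation, which is the more restrictive one. Setting $\lambda=0$ or $\mu=0$ gives functional equations in fewer variables. My plan is to compare degrees in $\partial$ and $\lambda$: write $f=\sum_{i+j\le n} c_{ij}\partial^i\lambda^j$ and look at the top homogeneous component $f_n$. That component must satisfy the same identities, which are homogeneous of degree $2n$. From the $\partial$-degree on both sides of \eqref{eq:RCom} after the substitution, I expect to force $n\le1$. For instance, the factor $f(\partial,-\partial-\mu+\lambda)$ mixes variables, and a comparison of the coefficient of the highest power of $\partial$ with $\lambda,\mu$ generic should yield a contradiction unless $f_n$ is a multiple of $(\partial+\lambda)^n$. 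Then \eqref{eq:LSym} with $f_n=c(\partial+\lambda)^n$ should force $n\le 1$.

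Once $\deg f\le1$, I write $f=a\partial+b\lambda+\alpha$ and substitute into \eqref{eq:RCom}. Comparing coefficients should give $a=b$, and \eqref{eq:LSym} is then satisfied automatically.

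The \textbf{main obstacle} is the degree reduction for the top component. If the direct comparison is messy, I will try specializations ($\mu=\lambda$, $\mu=0$, $\partial=0$) to get one-variable equations. For example, at $\mu=\lambda$ \eqref{eq:RCom} becomes trivial, so instead I will take $\lambda=0$ to get $f(-\mu,0)f(\partial,\mu)=f(\partial+\mu,0)f(\partial,-\partial-\mu)$, and argue from this that $f(x,0)$ is constant or $f$ has a special form.
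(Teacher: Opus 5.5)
Your proposal is correct in outline, and the overall logic matches the paper's: \eqref{eq:RCom} forces dependence on $\partial+\lambda$ only, and \eqref{eq:LSym} then bounds the degree. The technique in the \eqref{eq:RCom} step is different.

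The paper writes $f(x,y)=q(x,x+y)$ and turns \eqref{eq:RCom-rank1} into the functional equation \eqref{eq:RCom-Q}. It then shows that $q$ depends only on its second argument in three moves:
\begin{itemize}
\item setting $y=0$ gives $q(x,0)=c_0$;
\item differentiating in $y$ at $y=0$ shows that $q$ divides $c_0q'_1$;
\item when $c_0=0$, it inducts on the power of $y$ dividing $q$.
\end{itemize}

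Your $\partial$-degree comparison, once carried out, is shorter. It also applies to $f$ itself, not only to its top homogeneous component. Put $d=\deg_x f$ and $e=\deg_\partial f(\partial,-\partial-\mu+\lambda)$. Then
\[
\deg_\partial\bigl(f(-\mu,\lambda)f(\partial,\mu)\bigr)=d,\qquad
\deg_\partial\bigl(f(\partial+\mu-\lambda,\lambda)f(\partial,-\partial-\mu+\lambda)\bigr)=d+e,
\]
so $e=0$. That is, $f(x,t-x)$ is independent of $x$, and $f(x,y)=q(x+y)$.

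This makes your detour through $f_n$, and the second pass with $f=a\partial+b\lambda+\alpha$, unnecessary. With $f=q(\partial+\lambda)$ in hand you can do what the paper does: cancel the common nonzero factor $q(\partial+\lambda+\mu)$ in \eqref{eq:LSym}. Then $q(\partial+\mu)+q(-\mu)$ does not depend on $\mu$, so $q'$ is constant.

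Your version through $f_n=c(\partial+\lambda)^n$ also works, with a bit more bookkeeping. For $n\ge 2$, the term $n\partial\mu^{n-1}$ of $(\partial+\mu)^n+(-\mu)^n$ gives the contradiction. Then \eqref{eq:RCom} applied to $f=a\partial+b\lambda+\alpha$ does force $a=b$.

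There are two small slips:
\begin{itemize}
\item \eqref{eq:RCom} alone can never bound the degree, because every $q(\partial+\lambda)$ satisfies it. The bound $n\le 1$ has to come from \eqref{eq:LSym}, as your next sentence in fact says.
\item The specialization $\mu=\lambda$ is not trivial. It gives $f(-\lambda,\lambda)=f(\partial,-\partial)$.
\end{itemize}

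Your sufficiency argument is a valid substitute for the paper's direct check. For $\beta\ne 0$, use $C^{(\partial,\alpha/\beta)}$ over $\Cur\Bbbk$ with $v=\beta e$. For $\beta=0$, use $\Cur$ of the one-dimensional algebra with $v\circ v=\alpha v$.
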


\begin{proof}
For the ``if'' part, it is enough to check 
\eqref{eq:LSym} and \eqref{eq:RCom}
for $a=b=c=v$ which is straightforward (see also 
\cite{Hong2015}).

For the ``only if'' part, suppose 
\[
v\ooc{(\lambda)} v = f(\partial,\lambda )v
\]
for some nonzero polynomial $f(x,y)\in \Bbbk [x,y]$.
Then the right commutativity \eqref{eq:RCom}
is equivalent to 
\begin{equation}\label{eq:RCom-rank1}
f(-\mu, \lambda ) f(\partial , \mu)
=
f(\partial+\mu-\lambda, \lambda )f(\partial, -\partial-\mu+\lambda ).
\end{equation}
Let us prove that \eqref{eq:RCom-rank1} holds if and only if 
$f(x,y)$ is a polynomial in $x+y$. The ``if'' part is obvious.
To show ``only if'', put $f(x,y)=q(x,x+y)$ for some polynomial $q(x,y)$.
Then 
\[
q(-\mu, \lambda -\mu)q(\partial, \partial+\mu)=q(\partial+\mu-\lambda ,\partial+\mu)q(\partial, \lambda-\mu).
\]
Make a substitution 
$\partial=z$, $\partial+\mu =x$, 
$\lambda -\mu=y$ to obtain an equation for $q$:
\begin{equation}\label{eq:RCom-Q}
    q(z-x,y)q(z,x)=q(z-y,x)q(z,y).
\end{equation}
Put $y=0$ into \eqref{eq:RCom-Q} to get
$q(z-x,0)q(z,x) = q(z,x)q(z,0)$. If $q\ne 0$
then $q(x,0)=c_0\in \Bbbk $.

Next, evaluate the derivative $\partial/\partial y$ of 
\eqref{eq:RCom-Q} at $y=0$:
\[
q'_2(z-x,0)q(z,x)=-q'_1(z,x)q(z,0) + q(z,x)q'_2(z,0),
\]
where $q'_1$ and $q'_2$ stand for the partial derivatives of 
$q(x_1,x_2)$ relative to $x_1$ and $x_2$, respectively.

As a result, 
\[
(q'_2(z,0)-q'_2(z-x,0))q(z,x)= c_0 q'_1(z,x) ,
\]
i.e., either $c_0=0$ or 
$q$ divides its derivative $q'_1$. The second option means 
that $q(x,y)$ depends only on $y$, as desired.
If $c_0=0$ then 
\[
q(x,y) = y \hat q(x,y),
\]
where $\hat q$ satisfies the same equation \eqref{eq:RCom-Q}.
Hence, $q(x,y)$ depends only on $y$, and $f(x,y) = q(x+y)$.

The left symmetry 
\eqref{eq:LSym} turns into the following equation on $f(\partial,\lambda )$:
\[
f(-\lambda -\mu,\lambda )f(\partial, \lambda+\mu)
-
f(\partial+\lambda, \mu)f(\partial,\lambda )
=
f(-\mu -\lambda,\mu )f(\partial, \lambda+\mu)
-
f(\partial+\mu, \lambda )f(\partial,\mu ).
\]
For $f(x,y)=q(x+y)$, we have 
\[
q(-\mu )q(\partial+\lambda+\mu)
-
q(\partial+\lambda+\mu)q(\partial+\lambda )
=
q( -\lambda )q(\partial+\lambda+\mu)
-
q(\partial+\mu+\lambda )q(\partial+\mu ).
\]
Since $q\ne 0$, we have 
\[
 q(\partial+\mu ) + q(-\mu ) = q(\partial+\lambda )
+ q( -\lambda ) .
\]
The latter means that $q(y) = \alpha + \beta y$, 
$\alpha,\beta \in \Bbbk $, as desired.
\end{proof}

Denote by $\mathcal V_\alpha $
the rank one Novikov conformal algebra 
$Hv$ with 
\[
(v\ooc{(\lambda )} v) = (\partial+\lambda +\alpha) v,
\]
for $\alpha \in \Bbbk $.

\begin{corollary}\label{cor:Classification}
A simple finite Novikov conformal algebra $V$ 
over an algebraically closed field of characteristic zero is isomorphic either
to $\Cur_1$ or to $\mathcal V_\alpha$.
\end{corollary}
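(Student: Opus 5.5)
By Theorem~\ref{thm:RankOne}, a simple finite Novikov conformal algebra $V$ has rank one as an $H$-module. It is also torsion-free, since the $H$-torsion is an annihilator ideal. A finitely generated torsion-free module over the PID $H=\Bbbk[\partial]$ is free, so $V=Hv$ for a single generator $v$. By sesqui-linearity the whole structure is determined by $v\ooc{(\lambda)} v = f(\partial,\lambda)v$. Proposition~\ref{eq:Rank-1-classification} then forces $f=\beta(\partial+\lambda)+\alpha$ with $\alpha,\beta\in\Bbbk$. We must have $f\neq 0$, because $(V\ooc{(\lambda)}V)\ne 0$ is part of simplicity.

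We split into cases according to $\beta$.

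\emph{Case $\beta=0$.} Then $\alpha\ne 0$, and $v\ooc{(\lambda)}v=\alpha v$. Replacing $v$ by $e=\alpha^{-1}v$ gives $e\ooc{(\lambda)}e=e$, which is exactly $\Cur\Bbbk=\Cur_1$ from Example~\ref{exmp:Current}.

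\emph{Case $\beta\neq 0$.} Rescale $v\mapsto \beta^{-1}v$. Then $v\ooc{(\lambda)}v=(\partial+\lambda+\alpha')v$ with $\alpha'=\alpha/\beta$, which is precisely $\mathcal V_{\alpha'}$.

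The only remaining point is to confirm that both candidates really occur. For the corollary as stated (``isomorphic either to $\ldots$ or to $\ldots$''), only the necessity direction is needed, so the argument is complete. As a sanity check, and because the main result asserts these algebras are simple, we would also verify simplicity.

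$\Cur_1$ is simple: a nonzero ideal $I$ contains some $p(\partial)e$ with $p\neq 0$. Then $e\ooc{(\lambda)}p(\partial)e=p(\partial+\lambda)e$, and taking the coefficient at the top power of $\lambda$ gives a nonzero scalar multiple of $e$ in $I$.

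$\mathcal V_\alpha$ is simple: for $p(\partial)v\in I$, compute
\[
v\ooc{(\lambda)}p(\partial)v=p(\partial+\lambda)(\partial+\lambda+\alpha)v .
\]
Extracting the top coefficient in $\lambda$ again yields $v\in I$.

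The main obstacle is already absorbed by Theorem~\ref{thm:RankOne}. Here the only care needed is the freeness step, which relies on torsion-freeness, and the nonvanishing of $f$.
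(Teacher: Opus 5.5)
Your proof is correct and follows the paper's route: rank one from Theorem~\ref{thm:RankOne}, freeness via torsion-freeness, then Proposition~\ref{eq:Rank-1-classification} and a rescaling of the generator by $\alpha^{-1}$ (when $\beta=0$) or by $\beta^{-1}$ (when $\beta\neq 0$), a normalization the paper leaves implicit. Your check that $\Cur_1$ and $\mathcal V_\alpha$ are actually simple is not needed for the corollary as stated. It is correct, though, and it fills in a point the paper also takes for granted.
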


As it was shown in \cite{DK1998}, it may happen that a semisimple finite Lie conformal algebra is not isomorphic to the direct sum of simple ones. Therefore, in contrast to the case of ``ordinary'' algebras, a semisimple Lie conformal algebra may be not {\em classically semisimple}. 
The purpose of this section is to show that for finite Novikov conformal algebras every semisimple object is a direct sum of ideals each isomorphic to a simple Novikov conformal algebra from Corollary \ref{cor:Classification}.

Throughout this section, $V$ is a finite semisimple Novikov conformal algebra, $\mathcal R$ is the conformal subalgebra of $\Cend V$ generated by all right multiplications $R^a$, $a\in V$. By Proposition~\ref{prop:R-Semisimple}, $\mathcal R$ is semisimple, so we may conclude 
$\mathcal R\cong \Cur_1 \oplus \dots \oplus \Cur_1$, i.e.,
\[
\mathcal R = H e^1 \oplus \dots \oplus He^k,
\quad e^i\in \mathcal R,\ (e^i\oo\lambda e^j) = \delta_{ij}e^j.
\]

\begin{lemma}\label{lem:V^2=V}
The conformal algebra $V$ is perfect, i.e., $V^2=V$.
\end{lemma}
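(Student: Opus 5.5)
The plan is to show that $V=\mathcal R(V)$ and then observe that $\mathcal R(V)\subseteq V^2$. The second inclusion is immediate. Every element of $\mathcal R$ is an $H$-combination of iterated products $R^{a_1}\oo{\alpha_1}(\dots (R^{a_n}\oo{\alpha_n} R^{a_{n+1}})\dots)$, as in the proof of Lemma~\ref{lem:R-ideal-square}. Any value $f_\alpha(u)$ is therefore a linear combination of expressions $\{w\ooc{(\beta)} a\}$ with $w\in V$, so it lies in $V^2$.

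To obtain $V=\mathcal R(V)$, I will use the decomposition $\mathcal R=He^1\oplus\dots\oplus He^k$ and put $e=e^1+\dots+e^k$. Then $e\oo\lambda e=e$, since the cross terms vanish. Set $P=e_0$, the value of $e_\lambda$ at $\lambda=0$. From $e_\lambda(\partial v)=(\partial+\lambda)e_\lambda(v)$ we see that $P$ is $H$-linear. From $(e\oo\lambda e)_\mu=e_\lambda e_{\mu-\lambda}$ at $\lambda=\mu=0$ we get $P^2=P$. So $V=P(V)\oplus K$, where $K=\ker P$, and clearly $P(V)\subseteq\mathcal R(V)$. It remains to prove $K=0$.

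\emph{Step 1: $\mathcal R$ annihilates $K$.} Take $\lambda=\mu$ in $e_\mu=e_\lambda e_{\mu-\lambda}$. This gives $e_\mu(v)=e_\mu(P v)$, so $e_\mu(K)=0$. A general element of $\mathcal R$ is $\sum_i h_i(\partial)e^i$. Since $e^i=e^i\oo\lambda e$, the same substitution gives $e^i_\mu(v)=e^i_\mu(Pv)$. Hence $f_\mu(K)=0$ for all $f\in\mathcal R$. In particular, $\{k\ooc{(\lambda)} a\}=R^a_\lambda(k)=0$ for $k\in K$, that is, $K\ooc{(\lambda)}V=0$.

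\emph{Step 2: $K$ is an ideal of $V$.} We need $L^u_\lambda(K)\subseteq K[\lambda]$. Using $[L^u\oo\lambda e]\in\mathcal R[\lambda]$ (ad-invariance, from \eqref{eq:L-R_commute}), we write
\[
e_\mu L^u_\lambda=(e\oo\mu L^u)_{\lambda+\mu}
=(L^u\oo\lambda e)_{\lambda+\mu}-[L^u\oo\lambda e]_{\lambda+\mu}.
\]
When applied to $k\in K$, the first summand is $L^u_\lambda e_\mu(k)=0$ and the second vanishes by Step 1. Therefore $P(L^u_\lambda k)=0$, and $L^u_\lambda k\in K[\lambda]$. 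Together with Step 1, $K$ is an ideal with $K\ooc{(\lambda)}K=0$. So $K$ is solvable, and semisimplicity forces $K=0$.

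Consequently $V=P(V)\subseteq\mathcal R(V)\subseteq V^2$. The main obstacle is the bookkeeping of the $\lambda$-shifts in Step 2, namely checking that the identity for $e_\mu L^u_\lambda$ is stated with the correct arguments, in the same spirit as the proof of Lemma~\ref{lem:R-left-ideal_Nov}. If that step gets tangled, a fallback is to apply Lemma~\ref{lem:R-left-ideal_Nov} directly: the ideal $\mathcal R(V)$ is then one summand, and one argues that $K$ is the complementary annihilator ideal.
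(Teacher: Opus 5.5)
Your proof is correct and takes essentially the paper's route. The paper also uses the idempotent $e_0=\sum_i e^i_0$ and shows that $\bar a=a-e_0(a)$ is annihilated by every $f\in\mathcal R$ (your Step 1), so that $\bar a$ lies in the left annihilator of $V$. The only difference is at the end: the paper simply invokes the fact that the left annihilator of $V$ is an Abelian ideal, while your Step 2 re-proves this directly for $K=\ker e_0$ (which in fact equals $\mathrm{Ann}_l(V)$) via ad-invariance of $\mathcal R$, and your $\lambda$-shift bookkeeping there is right.
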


\begin{proof}
By definition, $V^2=\mathcal R(V)$. Assume there exists 
$a\in V$ but $a\notin V^2$. 
Then 
\[
\bar a = a -e^1_0(a)-\dots - e^k_0(a) \notin V^2
\]
since $e^i_\alpha (a)\in \mathcal R(V) = V^2$.
In particular, $\bar a\ne 0$.

Suppose 
$f\in \mathcal R$,
\[
f = h_1(\partial )e^1+\dots +h_k(\partial )e^k, \quad h_i\in H.
\]
Then 
$f_\alpha (a) =\sum\limits_{i=1}^k h_i(-\alpha )e^i_\alpha (a)$,
$f_\alpha (e^j_0 (a)) = (f\oo\alpha e^j)_\alpha (a) = h_j(-\alpha )e^j_\alpha (a)$.
Hence, 
$f_\alpha(\bar a)=0$ and, in particular,
$R^v_\lambda (\bar a)=0$, so $\bar a\ooc{(\lambda )}V=0$.
The left annihilator of $V$ is an Abelian ideal of $V$, which cannot be nonzero.
\end{proof}

\begin{lemma}\label{lem:Ideal-Ann}
    Let $I$ be a proper ideal of $V$. 
Then $\mathrm{Ann}_l (I) = \{ v\in V \mid
v\ooc{(\lambda )}I = 0\}$ is nonzero ideal of~$V$.
\end{lemma}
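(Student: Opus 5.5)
The plan is to prove the two claims separately: first that $\mathrm{Ann}_l(I)$ is an ideal of $V$, which holds for any ideal $I$ of any Novikov conformal algebra, and then that it is nonzero, which is where semisimplicity enters through the decomposition $\mathcal R=He^1\oplus\dots\oplus He^k$ and Lemma~\ref{lem:V^2=V}.

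\emph{Ideal property.} Sesqui-linearity \eqref{eq:sesqui-lin} makes $\mathrm{Ann}_l(I)$ an $H$-submodule. Take $v\in \mathrm{Ann}_l(I)$, $c\in V$, $i\in I$.
\begin{itemize}
\item[(a)] For right multiples, right commutativity \eqref{eq:RCom} gives $(v\ooc{(\lambda)} c)\ooc{(\mu)} i=\{(v\ooc{(\lambda)} i)\ooc{(\mu-\lambda)} c\}=0$.
\item[(b)] For left multiples, apply left symmetry \eqref{eq:LSym} to the triple $c,v,i$. This expresses $(c\ooc{(\lambda)}v)\ooc{(\lambda+\mu)} i$ through three terms:
\begin{itemize}
\item[$\cdot$] $c\ooc{(\lambda)}(v\ooc{(\mu)} i)$, which is $0$ since $v\in\mathrm{Ann}_l(I)$;
\item[$\cdot$] $(v\ooc{(\mu)} c)\ooc{(\lambda+\mu)} i$, which is $0$ by step (a);
\item[$\cdot$] $v\ooc{(\mu)}(c\ooc{(\lambda)} i)$, which is $0$ because $c\ooc{(\lambda)} i\in I[\lambda]$.
\end{itemize}
\end{itemize}
Hence $\mathrm{Ann}_l(I)$ is a two-sided ideal.

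\emph{Nonvanishing.} Let $\mathcal J\subseteq\mathcal R$ be the ideal of the commutative conformal algebra $\mathcal R$ generated by all $R^a$ with $a\in I$.

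First, $\mathcal J$ is ad-invariant. The derivation $[L^v\oo\lambda\cdot]$ maps generators of $\mathcal J$ into $\mathcal J$, because \eqref{eq:L-R_commute} gives $[L^v\oo\lambda R^a]=R^{(v\ooc\lambda a)}-\{R^a\oo\lambda R^v\}$, and $v\ooc{(\lambda)}a\in I$. The Leibniz rule then extends this to all of $\mathcal J$.

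Next, $\mathcal J$ is a sub-sum $\bigoplus_{i\in S}He^i$. In $\mathcal R\cong\bigoplus \Cur_1$ every ideal has this form: for $h\ne0$, the coefficients of $e^i\oo\lambda h(\partial)e^i=h(\partial+\lambda)e^i$ recover $e^i$. Let $\mathcal J'=\bigoplus_{i\notin S}He^i$. Then $\mathcal J\oo\lambda\mathcal J'=0$.

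We need $\mathcal J'\ne0$. Suppose instead $\mathcal J=\mathcal R$. Since $I$ is an ideal, $R^a(V)\subseteq I$ for $a\in I$, and $I$ is $\mathcal R$-invariant. So every element of $\mathcal J$ maps $V$ into $I$, and $\mathcal J(V)\subseteq I$. Lemma~\ref{lem:V^2=V} would then give $V=V^2=\mathcal R(V)=\mathcal J(V)\subseteq I$, contradicting properness of $I$. Hence $\mathcal J'\ne0$, and so $\mathcal J'(V)\ne0$, since $\mathcal J'$ consists of nonzero conformal endomorphisms of $V$.

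Finally, we show $\mathcal J'(V)\subseteq \mathrm{Ann}_l(I)$. Take $w=f_\alpha(u)$ with $f\in\mathcal J'$, $u\in V$, and let $i\in I$. Then
\[
R^i_\lambda(w)=R^i_\lambda f_\alpha(u)=(R^i\oo\lambda f)_{\lambda+\alpha}(u)=0,
\]
because $R^i\in\mathcal J$ and $\mathcal J\oo\lambda\mathcal J'=0$. Thus $\{w\ooc{(\lambda)} i\}=0$, i.e.\ $w\ooc{(\lambda)}i=0$, so $w\in\mathrm{Ann}_l(I)$.

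The main obstacle is identifying the right annihilating object. Using left multiplications, via \eqref{eq:R-L_Cend}, does not kill $I$. The argument instead uses the splitting of the commutative semisimple $\mathcal R$ into summands, together with perfectness of $V$, to show that the complementary summand is nonzero.
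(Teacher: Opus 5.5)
Your proof is correct and follows essentially the paper's argument. The ideal generated by the $R^a$, $a\in I$, satisfies $\mathcal J(V)\subseteq I$, so it is proper by Lemma~\ref{lem:V^2=V}. A complementary summand $He^i$ of $\mathcal R$ then annihilates it, and $e^i(V)$ is a nonzero subspace of $\mathrm{Ann}_l(I)$.

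Your direct check of the ideal property via \eqref{eq:LSym} and \eqref{eq:RCom} is equivalent to the paper's appeal to \eqref{eq:R-R_commute} and \eqref{eq:L-R_commute}. The ad-invariance of $\mathcal J$ that you prove is never used and can be dropped.
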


\begin{proof}
It is clear that 
$\mathrm{Ann}_l(I)$ is an ideal of $V$. 
Indeed, 
\[
\mathrm{Ann}_l(I) = \{u\in V \mid R^a_\lambda (u) = 0, a\in I\}.
\]
It follows from \eqref{eq:R-R_commute} and \eqref{eq:L-R_commute}
that 
if $u\in \mathrm{Ann}_l(I)$ then so are 
$R^v_\alpha (u)$ and $L^v_\alpha (u)$ for all $v\in V$, $\alpha\in \Bbbk $.

Denote by $\mathcal R^I$ the ideal of $\mathcal R$ generated by all $R^a$, $a\in I$. Then $\mathcal R^I(V)\subseteq I\ne V$, in particular, $\mathcal R^I\ne \mathcal R$ by Lemma~\ref{lem:V^2=V}. 

Every proper ideal in 
$\mathcal R\cong \Cur_1\oplus \dots \oplus \Cur_1$ 
has a nonzero annihilator (at least, one of $e^i$s). 
Suppose $\mathcal R^I\oo\lambda e = 0$ for some $0\ne e\in \mathcal R$. Then for every $\alpha,\beta \in \Bbbk $
and for every $a\in I$, $v\in V$ we have 
\[
0=(R^a\oo\alpha e)_{\beta+\alpha }(v)
 = R^a_\alpha (e_\beta (v)).
\]
Since $e\ne 0$, there exist $\beta $ and $v$ such that 
$u = e_\beta(v)\ne 0$, this is an element of the left annihilator of $I$. 
\end{proof}

\begin{theorem}
A finite semisimple Novikov conformal algebra $V$ is isomorphic to a direct sum of simple ones:
\[
V = V_1\oplus \dots \oplus V_k,
\]
$V_i\cong \Cur_1$ or $V_i\cong \mathcal V_{\alpha_i}$,
$\alpha_i\in \Bbbk$, $i=1,\dots, k$.
\end{theorem}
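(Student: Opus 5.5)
The plan is to split $V$ along the idempotent decomposition $\mathcal R = He^1\oplus\dots\oplus He^k$ of Proposition~\ref{prop:R-Semisimple}. For each $i$ put $V_i=e^i(V)$, the $H$-submodule spanned by the values $e^i_\alpha(v)$.

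First I will show that each $V_i$ is an ideal of $V$. The summands $He^i$ are exactly the minimal ideals of $\mathcal R\cong\Cur_1^{\oplus k}$. Hence they are permuted by conformal derivations and, more simply, are preserved by them. Indeed, a derivation $\varphi$ of $\Cur_1^{\oplus k}$ satisfies
\[
\varphi_\lambda(e^i)=\varphi_\lambda(e^i\oo{0}e^i)\in He^i+\text{(terms killed by the product with }e^i),
\]
and a computation as in Example~\ref{exmp:CDerCur} shows that it preserves each $He^i$. So every $He^i$ is an ad-invariant ideal of $\mathcal R$, and Lemma~\ref{lem:R-left-ideal_Nov} with $I=V$ gives that $V_i=(He^i)(V)$ is a two-sided ideal.

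Next I will prove that the sum is direct and exhausts $V$. By Lemma~\ref{lem:V^2=V}, $V=\mathcal R(V)=\sum_i V_i$. For directness, suppose $u\in V_i\cap\sum_{j\ne i}V_j$. Since $e^j\oo\lambda e^i=\delta_{ij}e^i$, the operator $e^j_\alpha$ kills $V_i$ for $j\ne i$, because $e^j_\alpha e^i_\beta=(e^j\oo\alpha e^i)_{\alpha+\beta}=0$. Likewise $e^i$ kills $V_j$. Hence $\mathcal R$ acts on $u$ by zero, so $u\ooc{(\lambda)}V=0$. The elements with this property form the left annihilator ideal. It is abelian, as in the proof of Lemma~\ref{lem:V^2=V}, so by semisimplicity it vanishes and $u=0$. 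The same reasoning gives $V_i\ooc{(\lambda)}V_j=0$ for $i\ne j$. For the product $L^{x}_\lambda$ with $x=e^i_\alpha(u)$, one uses \eqref{eq:R-L_Cend}, $L^{e^i_\alpha(u)}_\gamma=e^i_\alpha L^u_{\gamma-\alpha}$, whose image lies in $V_i$. Since this image also lies in $V_j$ when applied to $V_j$, it must vanish. Thus $V=V_1\oplus\dots\oplus V_k$ is a direct sum of ideals.

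Finally I will show that each $V_i$ is simple. Each $V_i$ is semisimple, because an ideal of a direct summand is an ideal of $V$. Its right multiplication algebra is the image of $He^i$, hence it is $\Cur_1$ (it is nonzero because $V_i=V_i^2\neq0$). I expect this step to be the main obstacle. Suppose $I\subsetneq V_i$ is a nonzero proper ideal. By Lemma~\ref{lem:Ideal-Ann} applied inside $V_i$, the left annihilator $\mathrm{Ann}_l(I)$ is a nonzero ideal. Then the ideal $\mathcal R^I$ of $\mathcal R_{V_i}\cong\Cur_1$ is proper, hence zero, since $\Cur_1$ is simple. This means $V_i\ooc{(\lambda)}I=0$. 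Therefore $I$ is a left-annihilated left ideal, and Corollary~\ref{cor:R-ideal-Nov} gives either $I\ooc{(\lambda)}V_i=0$ or $\mathcal R(I)$ a nonzero ideal. In the first case $I$ is an abelian ideal, which contradicts semisimplicity. In the second case, for $a\in I$ we have $R^a=0$. Since $\mathcal R^I=0$, we get $\mathcal R(I)\ooc{(\lambda)}\mathcal R(I)=0$ by Lemma~\ref{lem:R-ideal-square}-type reasoning. Indeed $\mathcal R(I)\subseteq I$ is right-multiplication closed, and $L^{f_\alpha(a)}=f_\alpha L^a$ acting into $I$ gives products lying in $\mathcal R(I\ooc{} I)=0$. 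So $\mathcal R(I)$ is a nonzero abelian ideal, again a contradiction. Hence $V_i$ is simple, and Corollary~\ref{cor:Classification} identifies it as $\Cur_1$ or some $\mathcal V_{\alpha_i}$.
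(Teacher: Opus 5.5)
Your proof is correct, but it takes a different route from the paper.

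The paper argues by induction on the rank of $V$. It picks a maximal ideal $I$ and takes $J=\mathrm{Ann}_l(I)\neq 0$ from Lemma~\ref{lem:Ideal-Ann}. Since $I\cap J$ is abelian, it is zero, so $V=I\oplus J$ with $J\cong V/I$ simple and $I$ semisimple of smaller rank.

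You instead build the whole decomposition at once from the idempotents of $\mathcal R=He^1\oplus\dots\oplus He^k$:
\begin{itemize}
\item $V_i=e^i(V)$ is an ideal by ad-invariance of $He^i$ and Lemma~\ref{lem:R-left-ideal_Nov};
\item the $V_i$ exhaust $V$ by Lemma~\ref{lem:V^2=V};
\item the sum is direct by orthogonality of the $e^i$ and the vanishing of the left annihilator;
\item each $V_i$ is simple because its right multiplication algebra is $\Cur_1$, so every proper ideal $I$ satisfies $V_i\ooc{(\lambda)}I=0$.
\end{itemize}

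Your route gives more information. The decomposition is canonical, with $V_i=e^i(V)$, and the number of simple summands equals the rank $k$ of $\mathcal R$. The paper's route is shorter. It needs only Lemma~\ref{lem:Ideal-Ann} and the existence of maximal ideals, which follows from Noetherianity. It does not require checking ad-invariance of each $He^i$ or identifying the right multiplication algebra of a summand.

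A few places should be tightened.
\begin{itemize}
\item \textbf{Ad-invariance of $He^i$.} Saying derivations ``permute'' the minimal ideals is misleading; give the direct computation instead. Write $\varphi_\lambda(e^i)=\sum_j f_j(\partial,\lambda)e^j$ and apply $\varphi_\lambda$ to $e^i\oo\mu e^i=e^i$. The right-hand side $\varphi_\lambda(e^i)\oo{\lambda+\mu}e^i+e^i\oo\mu\varphi_\lambda(e^i)$ has only $e^i$-components, so $f_j=0$ for $j\neq i$.
\item \textbf{Identifying $\mathcal R_{V_i}$.} State explicitly three facts: the restriction map $\mathcal R\to\Cend V_i$ kills $He^j$ for $j\neq i$; $R^a|_{V_i}$ depends only on the $V_i$-component of $a$; and $e^i_0$ acts as the identity on $V_i$. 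Together these show the image is nonzero, hence isomorphic to the simple algebra $\Cur_1$.
\item \textbf{Final step.} Once $V_i\ooc{(\lambda)}I=0$, you immediately get $I\ooc{(\lambda)}I=0$, so $I$ is a nonzero abelian ideal, contradicting semisimplicity. The case split via Corollary~\ref{cor:R-ideal-Nov} and Lemma~\ref{lem:R-ideal-square} is unnecessary. So is the appeal to Lemma~\ref{lem:Ideal-Ann}: you only use the fact from its proof that $\mathcal R^I$ is a proper ideal of $\mathcal R_{V_i}$.
\end{itemize}
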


\begin{proof}
Proceed by the rank of $V$ as $H$-module. If the rank is zero then $V=0$ (since torsion elements annihilate $V$). For $V\ne 0$, find a maximal ideal $I$ of $V$.
By Lemma~\ref{lem:Ideal-Ann} there exists 
$J=\mathrm{Ann}_l(I)\ne 0$.
Since $I\cap J$ is an Abelian ideal of $V$ and $I$ is maximal, we conclude $V = I\oplus J$, $J\cong V/I$.
If the rank of $J$ is zero then it belongs to the torsion of $V$ as $H$-module, which is impossible.
Also, an ideal of $I$ is an ideal of $V$.

Hence, the rank of $J$ is positive and $I$ is a semisimple Novikov conformal algebra, the rank of $I$ is smaller than that of $V$. 
Therefore, $I = V_2\oplus \dots \oplus V_k$, each $V_i$ is simple, and the claim follows for $V_1=J\cong V/I$.
\end{proof}

\section{Deformations and extensions}

An infinitesimal conformal deformation of
a Novikov conformal algebra $V$ with a $\lambda$-product
$(\cdot \ooc{(\lambda )}\cdot )$ may be defined
in a similar way to that of Lie, associative \cite{BKV1999}, or Poisson conformal algebras \cite{LiuZhou2023}.
Denote by $\hbar$ a formal variable.
Then
$\Bbbk \subset \bar \Bbbk_\hbar :=\Bbbk +\Bbbk \hbar \cong \Bbbk[[\hbar ]]/\hbar^2 \Bbbk [[\hbar ]]$.
The $H$-module $V$ may be considered as a subspace
of $\bar V_\hbar := V[[\hbar ]]/\hbar^2 V[[\hbar ]]$.

An {\em infinitesimal conformal deformation}  of $V$ is an
$\bar\Bbbk_\hbar$-linear operation
\[
 u\mathbin{\circ_{(\lambda )}^\hbar } v = u\ooc{(\lambda )} v + \hbar f_\lambda (u,v),
\]
which turns $\bar V_\hbar $ into a Novikov conformal algebra over
$\bar \Bbbk_\hbar $. The latter condition can be written as a collection
of conditions on $f_\lambda : V\otimes V \to V[\lambda ]$:
\begin{equation}
 f_\lambda (\partial u, v) = -\lambda f_\lambda(u,v),\quad
f_\lambda (u, \partial v) = (\lambda +\partial) f_\lambda(u,v);
 \label{eq:sesqui-cochain}
 \end{equation}
 \begin{multline}
 u\ooc{(\lambda )} f_\mu(v,w) - f_{\lambda+\mu}(u\ooc{(\lambda)}v, w) + f_\lambda (u, v\ooc{(\mu)} w) - f_\lambda(u,v)\ooc{(\lambda+\mu)} w \\
{}
=
v\ooc{(\mu)} f_\lambda(u,w) - f_{\lambda+\mu}(v\ooc{(\mu )}u, w) + f_\mu (v, u\ooc{(\lambda)} w) - f_\mu(v,u)\ooc{(\lambda+\mu)} w ;\label{eq:LSym-cochain}
\end{multline}
\begin{equation}
  f_{\lambda+\mu} (u\ooc{(\lambda)} v, w) + f_\lambda(u,v)\ooc{(\lambda+\mu)}w
=
\{ f_\lambda(u,w)\ooc{(\mu)} v \} + f_{-\partial-\mu}(u\ooc{(\lambda)} w, v).
\label{eq:RCom-cochain}
\end{equation}
Denote by $Z^2(V,V)$ the space of all such $f$ satisfying
\eqref{eq:sesqui-cochain}--\eqref{eq:RCom-cochain}.

Two infinitesimal conformal deformations $\bar V_\hbar$ and $\bar V_\hbar'$
 of a Novikov conformal algebra $V$ are {\em equivalent} if there exists a
$\bar \Bbbk _\hbar $-linear map $\varphi :  \bar V_\hbar = V+\hbar V \to \bar V_\hbar' = V+\hbar V$ preserving the deformed operations
such that
\[
 \varphi (u+\hbar v) = u +\hbar (v+\tau(u)), \quad u,v\in V,
\]
where $\tau : V\to V$ is an $H$-linear map.
In other words, $\varphi = \mathrm {id}+\hbar \tau \pmod {\hbar ^2}$.

For example, if the deformations $\bar V_\hbar $ and
$\bar V_\hbar'$ are respectively defined by cocycles $f,f'\in Z^2(V,V)$
then these deformations are equivalent if and only if
\[
 f_\lambda(u,v) - f'_\lambda(u,v) = u\ooc{(\lambda )}\tau (v) - \tau(u\ooc{(\lambda )} v) + \tau(u)\ooc{(\lambda )} v = : (d^1\tau)_\lambda (u,v),
\]
for all $u,v\in V$ (similar to \cite[Theorem 5.14]{LiuZhou2023}).

Denote by $B^{2}(V,V)$ the space of all those $f\in Z^2(V,V)$
that are of the form $d^1\tau$ for some $H$-linear $\tau : V\to V$.
Therefore, the deformations of $V$ up to equivalence
are described by the elements of the space $Z^2(V,V)/B^2(V,V)$.

This ``naive'' approach to conformal cohomology of Novikov algebras
has a natural explanation in terms of operads.

Let us define the cohomology of a Novikov conformal algebra $V$
in the same way as it was done for Lie conformal algebras in \cite[S.\,12]{BKV1999} ({\em BKV-cohomology}, for short).

The structure of a Novikov conformal algebra on an $H$-module $V$
is defined by a morphism $\nu $ from the operad $\mathcal O_{\Nov }$
to the operad $\mathrm{Chom}_V$ of conformal endomorphisms of $V$ (see \cite[S.\,5]{BK-et-al2018}), 
a full subcategory of the pseudo-tensor category $H$-mod:
\[
 \nu : \mathcal O_{\Nov} \to \mathrm{Chom}_V,
 \quad
 \nu(x_1\circ x_2) \mapsto (\cdot \ooc{(\lambda )} \cdot ).
\]

Recall that the Koszul dual operad $\mathcal O_{\Nov }^!$ represents
the class of opposite Novikov algebras (right-symmetric and left-commutative).
Hence,
$\mathcal O_{\Nov}^!$ can be considered as a sub-operad of the operad
$\Omega\Com$ of differential associative and commutative algebras generated by
$x_1\bullet x_2 = x_1' x_2$, where $x'=d(x)$, $d\in \Omega\Com(1)$ is the derivation.
In particular, $\mathcal O_{\Nov}^!(3)$ is spanned by linearly independent
monomials
\[
 x_1''x_2x_3,\ x_1x_2''x_3,\ x_1x_2x_3'',\  x_1'x_2'x_3,\
 x_1'x_2x_3',\ x_1x_2'x_3'.
\]

Consider the canonical morphism  of operads \cite{GK1994}
\[
 \iota :
 \mathcal O_{\Lie} \to \mathcal O_{\Nov }^!\otimes \mathcal O_{\Nov }
\]
sending the generator $[x_1,x_2]\in \mathcal O_{\Lie}(2)$
to
\[
 x_1'x_2\otimes x_1\circ x_2 - x_1x_2'\otimes x_2\circ x_1.
\]
The composition of $\iota $ with $\mathrm{id}\otimes \nu$ defines a morphism
\[
 \mathcal O_{\Lie} \to \mathcal P_V:=\mathcal O_{\Nov }^!\otimes \mathrm{Chom}_V.
\]
Define a complex $C^*(V,V)$ as the reduced BKV-cohomology complex
for the operad $\mathcal P_V$. Namely,
\[
 C^n(V,V) = \{f\in \mathcal P_V(n) \mid f^{\sigma }= (-1)^\sigma f, \ \sigma\in S_n\},
 \quad n\ge 1,
\]
and the differential $d^n: C^n(V,V)\to C^{n+1}(V,V)$ is given by \cite[Definition 12.4]{BKV1999}.
In particular,
for $n=1$ the space $C^1(V,V)$ coincides with $\mathrm{Chom}_V(1)=\mathrm{End}_H(V)$. For $n=2$, an element
$x_1'x_2\otimes f - x_1x_2'\otimes g \in \mathcal P_V(2)$ belongs to
$C^2(V,V)$ if and only if
$g = f^{(12)} \in \mathrm{Chom}_V(2)$. Therefore,
$C^2(V,V) \cong \Cend(V)$.
For $n=3$, the skew-symmetry of
\[
 x_1''x_2x_3\otimes f_1 +x_1x_2''x_3\otimes f_2 +  x_1x_2x_3''\otimes f_3 +
 x_1x_2'x_3' \otimes g_1 + x_1'x_2x_3' \otimes g_2 + x_1'x_2'x_3 \otimes g_3
 \in \mathcal P_V(3),
\]
$f_i,g_i\in \mathrm{Chom}_V(3)$,
means that $f_2 = - f_3^{(23)} = -f_1^{(12)}$ and
$g_2 = -g_3^{(23)} = -g_1^{(12)}$,
so a 3-cochain is defined by a pair $(f_1,g_3)\in \mathrm{Chom}_V(3)^2$.

The differential $d^1$ maps $f\in \mathrm{End}_H(V)$ into
$d^1f\in \Cend(V)$, where
\[
 (d^1f)_\lambda (u,v) = u\ooc{(\lambda )} f(v) - f(u\ooc{(\lambda )} v) + f(u)\ooc{(\lambda )} v
\]
for $u,v\in V$.
Similarly, we may evaluate $d^2(f)$ for $f\in \Cend(V)$.
Consider
\[
 p = x'_1x_2\otimes f - x_1x_2'\otimes f^{(12)} \in \mathcal P_V(2)
\]
and apply the differential formula of the BKV-cohomology theory to
the morphism
\[
 (\mathrm{id}\otimes \nu )\iota: [x_1,x_2]\mapsto \theta : = x_1'x_2\otimes (\cdot\ooc{(\lambda )}\cdot) - x_1x_2'\otimes \{\cdot\ooc{(\lambda )} \cdot\}^{(12)}
\]
to obtain
\begin{equation}\label{eq:Diff-2}
 d^2(p) = \theta (1,p) -\theta (1,p)^{(12)} + \theta (1,p)^{(123)}
 - p(\theta ,1) + p(\theta ,1) ^{(23)} - p(\theta  ,1) ^{(132)},
\end{equation}
where $1$ stands for the identity in $\mathcal P_V(1)$.

\begin{remark}
In a similar way, considering 
the canonical morphism 
\[
\mathcal O^!\otimes \mathcal O,
\]
one may construct cohomology of $\mathfrak V$-conformal algebras for any class $\mathfrak V$ governed by a binary quadratic operad~$\mathcal O$.
\end{remark}

Calculate the composition
$ \theta  (1,p) \in \mathcal P_V(3)$ following the rules of
\cite{BK-et-al2018} in the second tensor factor.
In the non-reduced form we have
\[
 p_{\lambda_1,\lambda_2}(u,v) = x_1'x_2 \otimes f_{\lambda _1}(u,v) - x_1x_2'\otimes f_{\lambda _2}(v,u),
\]
\[
 \theta _{\lambda_1,\lambda_2}(u,v) = x_1'x_2\otimes (u\ooc{(\lambda _1)} v) -x_1x_2'\otimes (v\ooc{(\lambda_2)} u),
\]
where $\lambda_1+\lambda_2 = -\partial $.
Then
\begin{multline}\label{eq:Cohom-comp-1}
\theta (1,p)_{\lambda_1,\lambda_2,\lambda_3}(u,v,w)
=\theta _{\lambda_1,\lambda_2+\lambda_3}(u,p_{\lambda_2,\lambda_3}(v,w))
\\
= x_1'x_2'x_3\otimes (u \ooc{(\lambda_1)} f_{\lambda_2}(v,w))
-x_1'x_2x_3' \otimes (u \ooc{(\lambda_1)} f_{\lambda_3}(w,v)) \\
- x_1(x_2'x_3)'\otimes (f_{\lambda_2}(v,w)\ooc{(\lambda _2+\lambda_3)} u )
+ x_1(x_2x_3')'\otimes (f_{\lambda_3} (w,v) \ooc{(\lambda_2+\lambda_3 )} u),
\end{multline}
where $\lambda_1+\lambda_2+\lambda_3=-\partial $.
Similarly,
\begin{multline}\label{eq:Cohom-comp-2}
 p(\theta ,1)_{\lambda_1,\lambda_2,\lambda_3 }(u,v,w)
 =p_{\lambda_1+\lambda_2,\lambda_3} (\theta _{\lambda_1,\lambda_2}(u,v),w) \\
 =
 (x_1'x_2)'x_3\otimes f_{\lambda_1+\lambda_2}(u\ooc{(\lambda_1)} v, w)
 - (x_1x_2')'x_3 \otimes f_{\lambda_1+\lambda_2}(v\ooc{\lambda_2} u, w)
 \\
 -x_1'x_2x_3'\otimes f_{\lambda_3}(w, u\ooc{(\lambda_1)} v)
 +x_1x_2'x_3'\otimes f_{\lambda_3}(w, v\ooc{(\lambda_2)} u).
\end{multline}
All terms of \eqref{eq:Diff-2} may be evaluated from
\eqref{eq:Cohom-comp-1} and \eqref{eq:Cohom-comp-2} by permutations, for example,
\[
 \theta (1,p)^{(123)}_{\lambda_1,\lambda_2,\lambda_3}(u,v,w)
 =\theta (1,p)_{\lambda_3,\lambda_1,\lambda_2}(w,u,v),
\]
etc. The first tensor factor $x_1''x_2x_3$ appears in
$p(\theta ,1)$, $p(\theta ,1)^{(23)}$, $\theta (1,p)^{(12)}$,  and $\theta (1,p)^{(123)}$.
Collecting the respective summands of \eqref{eq:Diff-2}, we obtain that
$d^2(p)=0$ implies \eqref{eq:RCom-cochain} for $\lambda_1=\lambda$, $\lambda_2=\mu$, $\lambda_3 =-\partial - \lambda -\mu$.
Similarly, comparing the coefficients at $x_1'x_2'x_3$
leads us to \eqref{eq:LSym-cochain}.
Conversely, for every $f\in Z^2(V,V)$, other summands of \eqref{eq:Diff-2}
 turn into zero due to skew symmetry.

Therefore, the second cohomology of $C^*(V,V)$ coincides with
$H^2(V,V) = Z^2(V,V)/B^2(V,V)$ defined above. Let us calculate $H^2(V,V)$
for simple finite Novikov algebras $V=\Cur_1$ and $V=\mathcal V_\alpha$,
$\alpha\in \Bbbk $. As a result, we describe infinitesimal conformal deformations of these Novikov conformal algebras.

In the following statements, $\bar \varphi  = \varphi + B^2(V,V)\in H^2(V,V)$ 
for $\varphi \in Z^2(V,V)$.

\begin{theorem}\label{thm:Z2Cur}
 For $V=\Cur_1=Hv$ with $(v\ooc{(\lambda)} v)=v$ we have $H^2(V,V)=\Bbbk \bar\omega $, where $\omega_\lambda(v,v) = (\partial+\lambda)v$.
\end{theorem}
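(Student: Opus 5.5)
Since $V=Hv$ is free of rank one, a cochain satisfying \eqref{eq:sesqui-cochain} is determined by a single polynomial: $f_\lambda(v,v)=F(\partial,\lambda)v$ with $F\in\Bbbk[x,y]$. The plan is to write \eqref{eq:LSym-cochain} and \eqref{eq:RCom-cochain} as functional equations on $F$, solve them, and then quotient by $B^2(V,V)$.

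The coboundaries are easy to list. An $H$-linear $\tau$ has the form $\tau(v)=t(\partial)v$ with $t\in H$. Using $v\ooc{(\lambda)}v=v$ and sesqui-linearity,
\[
(d^1\tau)_\lambda(v,v)=\bigl(t(\partial+\lambda)-t(\partial)+t(-\lambda)\bigr)v.
\]
So $B^2$ consists of the polynomials $T(x,y)=t(x+y)-t(x)+t(-y)$. For $t=1$ this gives the constant $1$, and for $t=x$ it gives $0$. In general these are exactly the polynomials that are ``additive-cocycle'' shaped.

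For the cocycle equations, substitute $u=v=w=v$ and use $v\ooc{(\lambda)}\bigl(g(\partial)v\bigr)=g(\partial+\lambda)v$ and $\bigl(g(\partial)v\bigr)\ooc{(\lambda)}v=g(-\lambda)v$.
\begin{itemize}
\item Condition \eqref{eq:RCom-cochain} becomes
\[
F(\partial,\lambda+\mu)+F(-\lambda-\mu,\lambda)=F(-\mu,\lambda)+F(\partial,-\partial-\mu).
\]
\item Condition \eqref{eq:LSym-cochain} becomes a symmetric-in-$(\lambda,\mu)$ equation:
\[
F(\partial+\lambda,\mu)-F(\partial,\lambda+\mu)+F(\partial,\lambda)-F(-\lambda-\mu,\lambda)
\]
must be symmetric under $\lambda\leftrightarrow\mu$.
\end{itemize}

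I will first solve the right-commutativity equation, mirroring the substitution $f(x,y)=q(x,x+y)$ from Proposition~\ref{eq:Rank-1-classification}. Setting $\mu=-\partial-\lambda$ and similar specializations should force $F(x,y)=P(x+y)+r(x)$, where $r$ is tied to the $-t(x)+t(-y)$ part. Comparing degrees should show that $F$ lies in the span of the coboundaries $T$ together with $x+y$ and a few low-degree terms.

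Then I impose left symmetry on the remaining candidates. For $F=\partial+\lambda$ it should hold identically; this is the content of the claim that $\omega$ is a cocycle, and it is a direct check. For higher-degree parts of $P$, the symmetry condition should kill $P$ modulo coboundaries, since coboundaries of $t(x)=x^n$ already produce the needed $(x+y)^n$ contributions.

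Finally I must show that $\omega$ itself is not a coboundary. If $x+y=t(x+y)-t(x)+t(-y)$, then setting $y=0$ gives $t(0)=0$, and setting $x=0$ gives $y=t(y)+t(-y)$. This forces the odd part of $t$ to be $y/2$, but then the odd part of $t(y)+t(-y)$ vanishes while the left side is $y$, a contradiction. Hence $\omega\notin B^2(V,V)$.

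The main obstacle is the middle step: organizing the two polynomial functional equations so that the general solution is visibly coboundaries plus $\Bbbk(x+y)$. I would handle it by passing to homogeneous components of degree $n$, since both equations are homogeneous-preserving. In each degree this leaves a small linear system, and I would check that its solution space has dimension $1$ for $n\ge 2$, matching the coboundaries of $x^n$, and dimension $2$ for $n=1$, namely the coboundary of $x$ (which is $0$) and $\omega$. I expect this degreewise computation to be the most laborious part.
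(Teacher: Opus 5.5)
Your plan has the same architecture as the paper's proof: reduce to two polynomial identities for $F$, split into homogeneous components, and compare with the coboundaries. However, it has a genuine gap, made worse by an error in one of the two identities.

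Your right-commutativity equation evaluates the braced term wrongly. Since $(F(\partial,\lambda)v)\ooc{(\nu)}v=F(-\nu,\lambda)v$, the term $\{f_\lambda(v,v)\ooc{(\mu)}v\}$ is obtained by putting $\nu=-\partial-\mu$. This gives $F(\partial+\mu,\lambda)v$, not $F(-\mu,\lambda)v$. The correct condition is
\[
F(\partial,\lambda+\mu)+F(-\lambda-\mu,\lambda)=F(\partial+\mu,\lambda)+F(\partial,-\partial-\mu).
\]
This is not cosmetic. In your version:
\begin{itemize}
\item $F=x+y$ gives $\partial+\lambda$ on the left and $\lambda-2\mu$ on the right;
\item the coboundary $2y(x+y)$ of $t=x^2$ also fails;
\item in degree one the only solution is $F=0$.
\end{itemize}
Solving your system as written would therefore give $H^2(V,V)=0$ rather than $\Bbbk\bar\omega$. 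Your check that $\omega$ is a cocycle was made only against left symmetry, so this went unnoticed.

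Even with the corrected equation, the heart of the theorem is only announced, not proved. That heart is the claim that in every degree $n\ge 2$ the homogeneous cocycles are exactly the multiples of $T_n=(x+y)^n-x^n+(-y)^n$. This is a statement for all $n$ at once, so ``checking a small linear system'' is not a proof; you need an argument uniform in $n$. The proposed normal form $F=P(x+y)+r(x)$ is also unsupported, and it cannot even contain the $t(-y)$ part of a coboundary.

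The paper's argument, translated to your notation, runs as follows:
\begin{enumerate}
\item Setting $\mu=0$ in right commutativity gives $F(-\lambda,\lambda)=F(\partial,-\partial)$, so $x+y\mid F$ in positive degree.
\item Setting $\mu=0$ in left symmetry then gives $F(\partial+\lambda,0)=F(\partial,0)-F(-\lambda,0)$. This kills the coefficient of $x^n$ for $n\ge 2$.
\item Subtracting a multiple of $T_n$ removes the $x^{n-1}y$ term, so $y^2\mid F$.
\item Differentiating right commutativity in $\lambda$ and setting $\lambda=0$ gives $\partial F/\partial y=0$, whence $F=0$.
\end{enumerate}

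There are also two smaller slips. In degree one the cocycle space is one-dimensional, not two, because the coboundary of $x$ is zero. And putting $y=0$ in $x+y=T(x,y)$ gives $x=t(0)$, which is already a contradiction. More simply, the degree-one part of every $T$ vanishes, so $\omega$ cannot be a coboundary.
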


\begin{proof}
Suppose a cocycle $\varphi \in Z^2(V,V)$ acts on 
the generator of $V$ as 
\[
\varphi_\lambda (v,v) = f(\lambda,\partial ) v
\]
for some polynomial $f(x_1,x_2)\in \Bbbk [x_1,x_2]$.
Due to \eqref{eq:sesqui-cochain} the polynomial 
$f$ completely determines the values of 
$\varphi_\lambda  $ on the $H$-module generated by~$v$.

Relations \eqref{eq:LSym-cochain} and \eqref{eq:RCom-cochain} for $\varphi $ turn into the following polynomial equations on $f$:
\begin{equation}\label{eq:Z2(1)Cur}
    f(\mu, \partial+\lambda ) + f(\lambda , \partial )
    -f(\lambda , -\lambda -\mu)
    =
f(\lambda , \partial+\mu) + f(\mu, \partial )- f(\mu, -\lambda -\mu),
\end{equation}
\begin{equation}\label{eq:Z2(2)Cur}
    f(\lambda+\mu, \partial ) + f(\lambda, -\lambda -\mu) = f(\lambda , \partial+\mu) + f(-\partial-\mu, \partial).
\end{equation}

A cocycle $\varphi $
defined by a polynomial $f$ as above is a coboundary 
if and only if 
\[
\varphi_\lambda (v,v) = v\ooc{(\lambda )} \tau(v) - \tau(v\ooc{(\lambda )} v) + \tau(v) \ooc{(\lambda )} v
\]
for some $H$-linear map $\tau : V\to V$. As $\tau (v) = g(\partial )v $ for some polynomial $g\in \Bbbk [x]$, 
we have 
\begin{equation}\label{eq:B2_Cur}
f(\lambda , \partial ) = (d^1g)(\lambda , \partial ):= g(\partial+\lambda) -g(-\lambda ) - g(\partial ).
\end{equation}

Since the equations \eqref{eq:Z2(1)Cur} and \eqref{eq:Z2(2)Cur} are homogeneous, we may assume 
the polynomial $f(x_1,x_2)$ is homogeneous as well, $m=\deg f\ge 0$.

For $m=0$, a constant polynomial $f(\lambda ,\partial )=c\in \Bbbk $ is a coboundary ($g=c$ works), so there are no non-trivial cocycles in degree zero.

Suppose $m>0$.
Put $\mu=0$ in \eqref{eq:Z2(2)Cur} to get 
\[
f(\lambda ,-\lambda ) = f(-\partial, \partial ), 
\]
i.e., $\lambda +\partial $ divides $f(\lambda , \partial )$.

Put $\mu=0$ in \eqref{eq:Z2(1)Cur} to get 
\begin{equation}\label{eq:Z2(lin)Cur}
f(0,\partial+\lambda ) = f(0,\partial ) - f(0,-\lambda ).    
\end{equation}
If $m=1$ then $f(\lambda ,\partial ) = c_1(\lambda +\partial )$, $c_1\in \Bbbk $. 
This polynomial defines a cocycle which is not a coboundary for $c_1\ne 0$ since no terms of degree one may appear in $d^1g$. Therefore, there is a 1-dimensional space of cohomologies spanned by 
the cocycle $\omega_\lambda (v,v) = (\partial +\lambda ) v$.

Suppose $m>1$. Then 
\[
f(\lambda , \partial ) = a_0\lambda ^m + \dots + a_{m-1}\lambda \partial^{m-1} + a_m\partial ^m.
\]
It follows from \eqref{eq:Z2(lin)Cur} that 
$a_m=0$. 
We may also eliminate the summand with 
$a_{m-1}$ 
modulo an appropriate coboundary: choose 
$g(x) = \frac{1}{m} a_{m-1}x^m$, 
then 
\[
d^1g(\lambda,\partial ) = \lambda (\lambda (\dots )+a_{m-1}\partial^{m-1}).
\]
Hence, modulo $B^2(V,V)$, we may assume that 
$\lambda^2$ divides $f(\lambda , \partial )$.

Apply the partial derivative $\partial /\partial \lambda $ to \eqref{eq:Z2(2)Cur}:
\[
f_1'(\lambda+\mu,\partial ) + f_1'(\lambda , -\lambda -\mu) - f'_2(\lambda , -\lambda -\mu)
=
f'_1(\lambda , \partial+\mu), 
\]
where $f'_i(x_1,x_2)$ stand for $\partial f/\partial x_i$, $i=1,2$.
Put $\lambda =0$ into the last expression. 
The summands $f'_1(0,x)$ are zero since $\lambda^2\mid f$, so we obtain 
\begin{equation}\label{eq:DiffEq_Cur}
f'_1(\mu,\partial )- f'_2(0,-\mu) = 0.
\end{equation}
Finding all homogeneous solutions of \eqref{eq:DiffEq_Cur} is a routine problem on 
the method of indeterminate coefficients, 
the only solution is 
\[
f(x_1,x_2) = c_0x_1^m.
\]
Recall that $x_1+x_2$ divides $f$, so we have $c_0=0$ as required.

Therefore, the only non-trivial cocycle appears in degree one.
\end{proof}

\begin{theorem}\label{thm:Z2Valpha}
 For $V=\mathcal V_\alpha$ with $(v\ooc{(\lambda )} v)=(\partial+\lambda+\alpha )v$ we have
 $H^2(V,V) = \Bbbk \bar \varepsilon$, where $\varepsilon_\lambda (v,v)=v$.
\end{theorem}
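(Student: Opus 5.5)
The plan follows the scheme of the proof of Theorem~\ref{thm:Z2Cur}. A cocycle $\varphi\in Z^2(V,V)$ is determined by a polynomial $f$ with $\varphi_\lambda(v,v)=f(\lambda,\partial)v$. I would write \eqref{eq:LSym-cochain} and \eqref{eq:RCom-cochain} for $u=v=w$ using $(v\ooc{(\lambda)} v)=(\partial+\lambda+\alpha)v$ and sesqui-linearity. This yields two polynomial identities in $\lambda,\mu,\partial$, analogous to \eqref{eq:Z2(1)Cur} and \eqref{eq:Z2(2)Cur}, in which every term of the form $f(\cdot,\cdot)$ is multiplied by a linear factor such as $(\partial+\lambda+\mu+\alpha)$ or $(\partial+\mu+\alpha)$.

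For $\tau(v)=g(\partial)v$, a direct computation gives
\[
(d^1g)(\lambda,\partial)=(\partial+\lambda+\alpha)\bigl(g(\partial+\lambda)-g(\partial)+g(-\lambda)\bigr).
\]
Thus every coboundary is divisible by $\partial+\lambda+\alpha$. For $\varepsilon$, i.e.\ $f\equiv 1$, both cocycle identities reduce to identities between linear forms, which I would check directly. Since $1$ is not divisible by $\partial+\lambda+\alpha$, $\varepsilon$ is not a coboundary, and so $\dim H^2(V,V)\ge 1$.

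For the upper bound, I would first replace $f$ by $f-f_0\cdot 1$, where $f_0$ is chosen so that the new $f$ vanishes on a suitable hyperplane. The aim is to show that after this normalization $\partial+\lambda+\alpha$ divides $f$. I would specialize the identities: set $\mu=0$ in the right-commutativity relation, as in the Cur case, and also restrict to $\partial=-\lambda-\alpha$. This should force $f(\lambda,-\lambda-\alpha)$ to be a constant $c$, and subtracting $c\varepsilon$ gives $f=(\partial+\lambda+\alpha)h(\lambda,\partial)$. Substituting back and cancelling the common linear factors should turn the cocycle identities for $f$ into identities for $h$. These should resemble \eqref{eq:Z2(1)Cur} and \eqref{eq:Z2(2)Cur}, which are satisfied by $h=g(\partial+\lambda)-g(\partial)+g(-\lambda)$.

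The remaining step is to show that every solution $h$ has this form. I would argue by the leading homogeneous component. The top-degree part of the equations for $h$ is $\alpha$-free and coincides with the $\alpha=0$ system. So I would classify the homogeneous solutions as in Theorem~\ref{thm:Z2Cur}: use the relation at $\mu=0$, kill the coefficient of $\lambda\partial^{m-1}$ by a coboundary $g=c\,x^m$, and differentiate in $\lambda$ at $\lambda=0$. Inductively subtracting coboundaries then lowers the degree.

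I expect the main obstacle to be the degree-one component, the analogue of the nontrivial class $\omega$ for $\Cur_1$. One must check that $h=\lambda+\partial$ is cancelled here, for instance because $(\partial+\lambda+\alpha)(\partial+\lambda)$ combined with lower terms is a coboundary, or because it fails the $\alpha$-dependent lower-order part of the equations. I would first test $g(x)=x^2/2$ and $g(x)=x$ directly, since the $\alpha$-shift mixes degrees and may absorb the class that survived in the current case. If this degree-one check works out, only the constant class $\bar\varepsilon$ remains, which gives $H^2(V,V)=\Bbbk\bar\varepsilon$.
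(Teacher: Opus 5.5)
Your route differs from the paper's in the normalization, and it is a good choice. The paper subtracts the constant $f(x,-x)$ so that $\partial+\lambda$ divides $f$. It then extracts $c_1(\partial+\lambda)$ from $\mu=0$ in \eqref{eq:Z2V(1)_a}, and uses coboundaries $d^1g_k$ (not homogeneous, because of the factor $\partial+\lambda+\alpha$) to make $\lambda^2$ divide the top component. It kills that component with the $\lambda$-derivative of \eqref{eq:Z2V(2)_a} and ends with $c_1(\partial+\lambda)\equiv-\alpha c_1$. You subtract $f(x,-x-\alpha)$ instead and write $f=(\partial+\lambda+\alpha)h$. This buys three things. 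Coboundaries correspond exactly to $h_g=g(\partial+\lambda)-g(\partial)+g(-\lambda)$, which preserves degree in $g$. After cancelling $(\alpha-\mu)(\partial+\lambda+\mu+\alpha)$, \eqref{eq:Z2V(2)_a} becomes literally \eqref{eq:Z2(2)Cur} for $h$. And $\alpha$ survives only in lower-order terms of the left-symmetry identity. So a top-degree induction reduces everything to the graded case $\alpha=0$.

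As written, though, the proposal is incomplete exactly at the step you call the main obstacle, and neither remedy you suggest would close it.

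A slip first. Putting $\mu=0$ and $\partial=-\lambda-\alpha$ into \eqref{eq:Z2V(2)_a} at the same time gives a tautology. Restrict instead to $\partial=-\lambda-\mu-\alpha$ with $\mu$ free; this gives $(\mu-\alpha)\bigl(F(\lambda+\mu)-F(\lambda)\bigr)=0$ for $F(x)=f(x,-x-\alpha)$.

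More importantly, the left-symmetry identity for $h$ does not resemble \eqref{eq:Z2(1)Cur}. After cancelling $\partial+\lambda+\mu+\alpha$, it says that
\[
(\partial+\lambda+\alpha)\bigl(h(\mu,\partial+\lambda)+h(\lambda,\partial)\bigr)-(\alpha-\mu)\bigl(h(\lambda+\mu,\partial)+h(\lambda,-\lambda-\mu)\bigr)
\]
is invariant under $\lambda\leftrightarrow\mu$. This weighting is what kills the analogue of $\omega$. For $h_1=a\lambda+b\partial$, the $\alpha$-free top part gives $2b(\lambda-\mu)(\partial+\lambda+\mu)=0$, and \eqref{eq:Z2(2)Cur} forces $a=b$, so $h_1=0$.

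Your two suggested mechanisms do not apply. Coboundaries cannot absorb anything in degree one: $g=cx$ gives $h_g=0$, and $g=x^2/2$ gives $\lambda(\partial+\lambda)$, which has degree two. And the obstruction sits in the $\alpha$-free part, not in the $\alpha$-dependent lower-order terms.

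In general degree $m\ge 1$, put $\mu=0$ in the top part. This gives $(\partial+\lambda)p(\partial+\lambda)=\partial p(\partial)+\lambda p(-\lambda)$ for $p(y)=h_m(0,y)$, which forces $p=0$. This contrasts with \eqref{eq:Z2(lin)Cur}, which admits $p(y)=y$. Then the $\lambda$-derivative of \eqref{eq:Z2(2)Cur} at $\lambda=0$ gives $h_m=Q(\partial+\lambda)-Q(\partial)+Q(-\lambda)$ with $Q=c\,x^m$. This is $h_Q$, and it vanishes for $m=1$. Subtracting $d^1Q$ lowers the degree. The induction ends with a constant $h=c$, which is $h_g$ for $g=c$. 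Hence $f\equiv f(x,-x-\alpha)\,\varepsilon$ modulo $B^2(V,V)$, and $H^2(V,V)=\Bbbk\bar\varepsilon$.
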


\begin{proof}
 Let $\varphi \in Z^2(V,V)$. Then it is completely determined by 
 a polynomial $f(x_1,x_2)\in \Bbbk [x_1,x_2]$
 such that $\varphi_\lambda (v,v) = f(\lambda , \partial )v$
for the generator~$v$ of $\mathcal V_\alpha $.
Relations \eqref{eq:LSym-cochain} and \eqref{eq:RCom-cochain}
turn into 
\begin{multline}\label{eq:Z2V(1)_a}
    (\partial+\lambda+\alpha)f(\mu, \partial+\lambda )- (\alpha-\mu)f(\lambda+\mu,\partial ) + (\partial+\lambda+\mu+\alpha)
     (f(\lambda,\partial ) - 
     f(\lambda , -\lambda -\mu)) \\
    =
(\partial+\mu+\alpha)f(\lambda , \partial+\mu ) - (\alpha -\lambda )f(\lambda+\mu, \partial ) + (\partial+\lambda+\mu+\alpha)
(f(\mu,\partial ) - f(\mu, -\lambda -\mu))
\end{multline}
 and 
\begin{multline}\label{eq:Z2V(2)_a}
(\mu-\alpha )f(\lambda+\mu,\partial )- (\partial+\lambda+\mu+\alpha)f(\lambda, -\lambda-\mu)
\\
=
(\mu-\alpha )f(\lambda , \partial+\mu)
- (\partial+\lambda+\mu+\alpha)f(-\partial-\mu, \partial ).
\end{multline}
A cocycle $\varphi $ defined by a polynomial $f$ is a coboundary 
if and only if 
\[
f(\lambda, \partial ) = (d^1g)(\lambda ,\partial ) :=  (\partial+\lambda+\alpha) (g(\partial+\lambda )+g(-\lambda ) -g (\partial ))
\]
for some polynomial $g\in \Bbbk [x]$.

For example, $f(x_1,x_2)=c \in \Bbbk $ defines a cocycle which is not a coboundary (for $c\ne 0$) since it is not divisible by 
$(\partial+\lambda+\alpha)$.

Let us show that every polynomial $f$ satisfying 
\eqref{eq:LSym-cochain}, \eqref{eq:RCom-cochain} is constant modulo $d^1g$, $g\in \Bbbk [x]$.

Put $\mu=0$ in \eqref{eq:Z2V(2)_a} to obtain 
\[
f(\lambda , -\lambda ) = f(-\partial, \partial ). 
\]
Hence, $f(x,-x) = c$; we may assume $c=0$ modulo the constant cocycle. Therefore, $\partial+\lambda $ divides $f(\lambda, \partial)$.

Then put $\mu=0$ in \eqref{eq:Z2V(1)_a} to get 
\[
(\partial+\lambda+\alpha)(-f(0,\partial+\lambda ) + f(0,\partial)- f(0,-\lambda )) = 0.    
\]
The latter means $f(0,\partial ) = c_1\partial $, $c_1\in \Bbbk $, so 
\begin{equation}\label{eq:Z2V2(h)}
f(\lambda , \partial ) = c_1(\partial+\lambda ) + \lambda(\partial+\lambda )h(\lambda ,\partial)
\end{equation}
for some polynomial $h$.
Let us decompose $h$ into homogeneous summands to present 
\begin{equation}\label{eq:Z2V(3)_a}
f(\lambda , \partial ) = c_1(\lambda +\partial ) + f_2(\lambda,\partial) + \dots + f_m(\lambda, \partial ), 
\end{equation}
where $\deg f_k = k$. Note that $\lambda (\lambda+\partial )$
divides every $f_k(\lambda , \partial )$, $k=2,\dots, m$.

Suppose 
\[
f_k(\lambda, \partial ) = (c_k\lambda \partial^{k-2} + \lambda^2(\dots ))(\lambda +\partial ).
\]
For $k>1$, there exists a polynomial 
$g_k(x)$ such that $d^1g_k(\lambda ,\partial ) = (c_k\lambda \partial^{k-2} + \lambda^2(\dots ))(\lambda +\partial +\alpha )$.
The highest homogeneous component of $d^1g_k$ is of the same form as $f_k$: the coefficient at $\lambda\partial^{k-1}$ is $c_k$. Hence, modulo $B^2(V,V)$, we may assume 
that $\lambda^2$ divides the highest homogeneous component of $f$:
\[
x_1^2 \mid G(x_1,x_2) := f_m(x_1, x_2 ).
\]
The latter means 
$G'_i(0,x_2) = 0$, where $G'_i $ stand for $\partial G/\partial x_i$, $i=1,2$.

Now put the presentation \eqref{eq:Z2V(3)_a}
into \eqref{eq:Z2V(2)_a} and consider the component of highest degree $m+1$:
\begin{equation}
    \label{eq:Z2V(pr)}
    \mu G(\lambda+\mu , \partial ) - (\partial+\lambda+\mu) G(\lambda, -\lambda -\mu) 
    =
    \mu G(\lambda , \partial +\mu ) - (\partial+\lambda +\mu)G(-\partial-\mu,\partial ).
\end{equation}
Apply partial derivative $\partial/\partial\lambda $ to 
\eqref{eq:Z2V(pr)}:
\begin{multline*}
\mu G'_1(\lambda +\mu,\partial ) - G(\lambda , -\lambda -\mu)
-(\partial +\lambda +\mu)(G'_1(\lambda , -\lambda -\mu ) - G'_2(\lambda, -\lambda -\mu )) 
\\
= 
\mu G'_1(\lambda , \partial+\mu) - G(-\partial-\mu, \partial ).
\end{multline*}
Put $\lambda =0$ into the last expression to get
\begin{equation}\label{eq:Z2V(fi)}
\mu G'_1(\mu,\partial ) = - G(-\partial-\mu, \partial ).    
\end{equation}
It is not hard to note that the only homogeneous solution 
$G(x_1,x_2)$ of
\eqref{eq:Z2V(fi)} is proportional to $x_1x_2^{m-1}+x_2^m$
which is not divisible by $x_1^2$ apart from $G=0$.

Therefore, the polynomial $h$ in \eqref{eq:Z2V2(h)} has no nonzero homogeneous components
and $f(\lambda , \partial ) = c_1(\lambda +\partial ) \equiv -\alpha c_1 \pmod {B^2(V,V)}$, 
as required.
\end{proof}

Let $V$ be a Novikov conformal algebra, and 
let $M$ be a Novikov conformal bimodule over 
$V$. 
The second cohomology space of $V$ with coefficients in $M$
describes the equivalence classes of extensions
\[
0\to M\to E\to V\to 0
\]
with $M\ooc{(\lambda )}M = 0$ in $E$.
The details are completely similar to what is known 
in Lie or associative case \cite{BKV1999}.

It is well known that the Virasoro conformal algebra 
$Hv$ with $[v\oo{\lambda } v] = (2\lambda+\partial ) v$
has a non-trivial central extension by means of 
the scalar module $M=\Bbbk e$, $\partial e = 0$.
Let us consider a similar question for simple 
Novikov conformal algebras. 

Let $V$ be a Novikov conformal algebra and let $M=\Bbbk e$ be a 1-dimensional $H$-module such that $\partial e=0$. Then $M$ is a conformal bimodule over $V$ so that 
\[
u\ooc{(\lambda )} e = e\ooc{(\lambda )} u = 0
\]
for all $u\in V$. 
Consider an extension $E$ of $V$ by means of $M$.
The structure of $E$ is completely determined by a mapping 
\[
f_\lambda : V\otimes V \to M[\lambda ]\cong \Bbbk [\lambda ]e
\]
such that 
\begin{gather}
    f_\lambda (\partial u,v) = -\lambda f_\lambda (u,v),
    \quad 
f_\lambda (u,\partial v) = \lambda f_\lambda (u,v),
               \label{eq:sesqui-Ext} \\
f_{\lambda +\mu} (u\ooc{(\lambda )}v, w) - 
f_\lambda (u, v\ooc{(\mu )} w)
=
f_{\lambda+\mu} (v\ooc{(\mu )} u, w) - f_\mu (v, u\ooc{(\lambda )}w), 
         \label{eq:LSym-Ext} \\
f_{\lambda+\mu} (u\ooc{(\lambda )} v, w)
 = f_{-\partial-\mu } (u\ooc{(\lambda )} w, v).
   \label{eq:RCom-Ext}
\end{gather}
The last two equations are obtained from \eqref{eq:LSym-cochain} and \eqref{eq:RCom-cochain}
assuming the action of $V$ on $M$ is trivial.

Two extensions $E$ and $E'$ defined respectively 
by maps $f$ and $f'$ are said to be equivalent 
if there exists an isomorphism of conformal algebras 
$E\to E'$ which is an identity map on~$M$. 
As in the case of Lie or associative conformal algebras
\cite{BKV1999}, $E$ and $E'$ are equivalent if and only if 
\[
f_\lambda (u,v) - f'_\lambda(u,v) = (d^1g)_\lambda (u,v):=
g(u\ooc{(\lambda )} v), \quad u,v\in V,
\]
for some $H$-linear map $g:V\to M$.

\begin{theorem}\label{thm:Nov_Ext}
Let $V=\Cur_1$ or $V=\mathcal V_\alpha $, $\alpha \in \Bbbk $. Then $V$ has no nontrivial extensions by means of the scalar module $M=\Bbbk e$.
\end{theorem}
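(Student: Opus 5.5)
The plan is to reduce everything to a single polynomial in one variable and check that the cocycle equations force it into the image of $d^1$. Let $V=Hv$ be free of rank one, and let $f$ satisfy \eqref{eq:sesqui-Ext}--\eqref{eq:RCom-Ext}. Write $f_\lambda(v,v)=p(\lambda)e$ with $p\in\Bbbk[\lambda]$. Since $\partial e=0$, the sesqui-linearity \eqref{eq:sesqui-Ext} gives
\[
f_\lambda(h(\partial)v,g(\partial)v)=h(-\lambda)g(\lambda)p(\lambda)e .
\]
So $f$ is determined by $p$. Next, an $H$-linear map $g:V\to M$ is $g(v)=ce$ with $g(\partial v)=\partial g(v)=0$. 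Hence $g(h(\partial)v)=h(0)ce$.

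For $V=\Cur_1$ (so $v\ooc{(\lambda)}v=v$), the coboundaries are $(d^1g)_\lambda(v,v)=ce$, i.e.\ the constant polynomials $p$. Substituting $u=v=w=v$ into \eqref{eq:LSym-Ext} gives
\[
p(\lambda+\mu)-p(\lambda)=p(\lambda+\mu)-p(\mu),
\]
so $p(\lambda)=p(\mu)$ and $p$ is constant. Thus $f=d^1g$ and the extension splits. We do not even need \eqref{eq:RCom-Ext} here, since it is trivially satisfied by constants.

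For $V=\mathcal V_\alpha$ we have $v\ooc{(\lambda)}v=(\partial+\lambda+\alpha)v$. The coboundaries are $(d^1g)_\lambda(v,v)=g((\partial+\lambda+\alpha)v)=c(\lambda+\alpha)e$, i.e.\ the polynomials $p$ proportional to $\lambda+\alpha$. In this case we use \eqref{eq:RCom-Ext} with $u=v=w=v$.
\begin{itemize}
\item The left-hand side is $f_{\lambda+\mu}((\partial+\lambda+\alpha)v,v)=(\alpha-\mu)p(\lambda+\mu)$.
\item On the right-hand side, $\partial$ acts as $0$ on $M$, so the subscript $-\partial-\mu$ becomes $-\mu$. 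This gives $(\lambda+\mu+\alpha)p(-\mu)$.
\end{itemize}
Putting $s=\lambda+\mu$, we get the identity
\[
(\alpha-\mu)p(s)=(s+\alpha)p(-\mu)
\]
in independent variables $s,\mu$. Setting $\mu=\alpha$ gives $(s+\alpha)p(-\alpha)=0$, so $p(-\alpha)=0$. Then the ratio $p(s)/(s+\alpha)=p(-\mu)/(\alpha-\mu)$ is independent of both variables, so $p(\lambda)=k(\lambda+\alpha)$ for some $k\in\Bbbk$. This is exactly $d^1g$ with $c=k$, so again every extension is trivial.

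The only delicate point is the correct evaluation of the right-hand side of \eqref{eq:RCom-Ext}. One has to interpret the subscript $-\partial-\mu$ with $\partial$ acting on the value in $M$, where it vanishes. One also has to apply the sesqui-linearity in the first argument with the right sign, $h(\partial)\mapsto h(-\nu)$. Once that is fixed, the rest is the elementary polynomial manipulation above. As a consistency check, \eqref{eq:LSym-Ext} reads $(\lambda-\mu)p(\lambda+\mu)=(\lambda+\mu+\alpha)(p(\lambda)-p(\mu))$, and $p=k(\lambda+\alpha)$ satisfies it.
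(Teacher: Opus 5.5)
Your proof is correct and follows essentially the same route as the paper: you reduce the cocycle to one polynomial $p(\lambda)$ via \eqref{eq:sesqui-Ext}, get $p$ constant from \eqref{eq:LSym-Ext} for $\Cur_1$, and get $p=k(\lambda+\alpha)$ from $(\alpha-\mu)p(\lambda+\mu)=(\lambda+\mu+\alpha)p(-\mu)$ for $\mathcal V_\alpha$. The only minor difference is that you obtain the factor $\lambda+\alpha$ by setting $\mu=\alpha$ in the right-commutativity identity, whereas the paper gets it from \eqref{eq:LSym-Ext} using that $\lambda-\mu$ and $\lambda+\mu+\alpha$ are coprime; your version shows that \eqref{eq:RCom-Ext} alone already suffices for $\mathcal V_\alpha$.
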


\begin{proof}
Assume an extension $E$ of $V=Hv$ by means of the scalar module $M$ is defined by a cocycle $\varphi $ such that 
\[
\varphi_\lambda (v,v) = f(\lambda )e
\]
for some polynomial $f(t)\in \Bbbk [t]$. 

Suppose $V=\Cur_1$. Then \eqref{eq:LSym-Ext} turns into
$f(\lambda ) = f(\mu )$, so $f(t)=c_0\in \Bbbk $. 
Obviously, the cocycle $\varphi_\lambda (v,v)=c_0e$
is a coboundary: $\varphi = d^1g$, where $g(v)=c_0e$.

Suppose $V=\mathcal V_\alpha $. Then \eqref{eq:LSym-Ext} and \eqref{eq:RCom-Ext} turn into 
\[
\begin{gathered}
(\lambda -\mu)f(\lambda +\mu ) = (\lambda+\mu+\alpha )(f(\lambda ) - f(\mu )), \\
(\alpha -\mu)f(\lambda +\mu ) = (\lambda+\mu+\alpha )f(-\mu).
\end{gathered}
\]
Since $\lambda -\mu $ and $\lambda+\mu+\alpha $ are mutually prime, we derive that $t+\alpha $ divides $f(t)$. Substitute $f(t)=(t+\alpha )h(t)$ into the second equation to get $h(\lambda+\mu)=h(-\mu)$, i.e., 
$f(t) = (t+\alpha )c_1$, $c_1\in \Bbbk $.
The corresponding cocycle $\varphi $ is a coboundary:
for $g(v)=c_1e$ we have 
$(d^1g)_\lambda (v,v) = g((\partial+\lambda+\alpha)v)=
(\lambda+\alpha)c_1 e$.
\end{proof}

\end{document}